\theoremstyle{plain}
\newtheorem{thm}{Theorem}[section]     
\newtheorem{theorem}[thm]{Theorem}     
\newtheorem{cor}[thm]{Corollary}
\newtheorem{lemma}[thm]{Lemma}     
\newtheorem{prop}[thm]{Proposition}
\theoremstyle{remark}      
\newtheorem{remark}[thm]{Remark}
\theoremstyle{definition}      
\newtheorem{defn}[thm]{Definition}
\def\al{{\alpha}}         
\def\be{{\beta}}         
\def\de{{\delta}}         
\def\om{{\omega}}         
\def\Om{{\Omega}}         
\def\la{{\lambda}}
\def\si{{\sigma}}
\def\ep{{\epsilon}}         
\def\th{{\theta}}         
\def\Th{{\Theta}}         
\def\phi{{\varphi}}
\let\pa\partial
\DeclareMathAlphabet{\doba}{U}{msb}{m}{n}
\gdef\mN{\doba{N}}
\gdef\mR{\doba{R}}         
\gdef\mS{\doba{S}}
\def\Vol{{\mathop{\rm Vol}}}     
\def\Scal{{\mathop{\rm Scal}}}     
\def\dist{{\mathop{\rm dist}}}
\def\d2dt{\frac{d^2}{dt^2}}
\def\eref#1{{\rm (\ref{#1})}}   
\def\Rmax{R_{\textrm{max}}}
\long\def\ignorethis#1{}
\newdimen\templaenge
\def\Atbox#1#2{\setbox0\hbox{$\displaystyle #1$}\templaenge=\textwidth\advance\templaenge by -\wd0%
\setbox1\hbox{$#2$}\advance\templaenge by -\wd1%
$$#1\hbox{\kern\templaenge$#2$\hss}$$\par\bigbreak}
\newtheorem*{subcaseI.1}{Subcase I.1}
\newtheorem*{subcaseI.2}{Subcase I.2}
\newtheorem*{subcaseII.1}{Subcase II.1}
\newtheorem*{subsubcaseII.1.1}{Subsubcase II.1.1}
\newtheorem*{subsubcaseII.1.2}{Subsubcase II.1.2}
\newtheorem*{subcaseII.2}{Subcase II.2}
\def\detwo{\de_0}
\def\lds{\Gamma_n^{\rm lds}}
\def\conc{{\rm Conc}}
\def\riem{\mathcal{R}}
\newcommand{\definedas}{\mathrel{\raise.095ex\hbox{\rm :}\mkern-5.2mu=}}
\begin{document}     

\title{Relative Yamabe invariant and c-concordant metrics}     
 

 
\author{Emmanuel Humbert} 
\address{Institut \'Elie Cartan, BP 239 \\ 
Universit\'e de Nancy 1 \\
54506 Vandoeuvre-l\`es-Nancy Cedex \\ 
France}
\email{humbert@iecn.u-nancy.fr}

\begin{abstract}
We prove  a surgery formula for the relative Yamabe invariant with  several
applications.  In particular, we study a Yamabe invariant defined on the
set of concordance classes of metrics.    
\end{abstract}     

\subjclass[2000]{35J60, 57R65, 55N22}
%
%

\date{July 17th, 2008}

\keywords{Yamabe operator, Yamabe invariant, surgery, positive scalar
curvature} 

\maketitle     

\tableofcontents     


\section{Introduction}

\noindent
Let $M$ be a compact $n$-manifold ($n \geq 3$) without boundary and a conformal class of metrics $C$ on $M$.  The {\it Yamabe constant} is defined as
$$\mu(M,C) = \inf  \int_M \Scal_g dv_g.$$  
where the infimum is taken over the metrics $g \in C$ such that
$\Vol_g(M)=1$. For any metric $g$ on $M$ and any $u \in C^{\infty}(M)$,  let 
$$J^n_{M,g} (u) = \frac{ \int_{M} u L_g(u) dv_g}{\left(\int_M
  |u|^{\frac{2n}{n-2} } dv_g \right)^{\frac{n-2}{n}}}$$
\noindent Here, $L_g$ is the  {\it conformal Laplacian} or {\it Yamabe
  operator} and is defined by  
$$L_g = \frac{4(n-1)}{n-2} \Delta_g + \Scal_g.$$
Then, it is well known that (see \cite{lee.parker:87,aubin:98,hebey:97}) 

\begin{eqnarray} \label{variational}
\mu(M,[g]) = \inf_{u\in C^{\infty}(M); u \not=0} J^n_{M,g}(u).
\end{eqnarray}
The Yamabe constant has been introduced by Yamabe in 1960 while attempting
to find metrics of constant scalar curvature in a conformal class. He was
able to show that the infimum in the definition of $\mu$ is always
attained. Unfortunately, there was a gap in his proof which was repaired by
Tr\"udinger (1968), Aubin (1976) and Schoen (1984). For a survey on this
problem, the so-called {\it Yamabe problem}, the reader may refer to
\cite{lee.parker:87,aubin:98,hebey:97}. \\

\noindent Now, let 
$$\sigma(M)= \sup \mu(M,C)$$
where the supremum runs over the sets of all conformal classes $C$ of metrics. 
Aubin proved in \cite{aubin:76} that for all $C$,
 $\mu(M,C) \leq \mu(\mS^n) = n(n-1) \om_n^{\frac{2}{n}}$ where
$\mS^n$ denotes the $n$-sphere $S^n$ equipped with its standard metric. Its
volume is denoted by $\om_n$. 
This
implies that $\sigma (M) \leq \sigma(S^n) = \mu(\mS^n)$ and  hence,
$\sigma(M)$  is well-defined and depends only on the differentiable
manifold $M$. It is called the {\it Yamabe invariant}.  \\

\noindent The critical points of the functional $g \to \int_M \Scal_g dv_g$
among the metrics $g$ with $\Vol_g(M)=1$ are Einstein metrics. Besides, one
can check that $\si(M) >0$ if and only if there exists a metric $g$ on $M$
with positive scalar curvature. Hence, the study of $\sigma(M)$ is connected to the difficult and still unsolved problems
to determine all manifolds admitting einstein metrics and admitting
metrics of positive scalar curvature. 
This explains why the Yamabe invariant has attracted so much interests in
the last decades. For more informations and references on $\si$, the reader
may consult \cite{ammann.dahl.humbert:08}. The Yamabe invariant turns out to be
very difficult to compute explicitly and the value of $\sigma$ is known
only for
very few manifolds (see again \cite{ammann.dahl.humbert:08}). 
A natural way to go further in its study is to use surgery techniques,
whose power is demonstrated in the papers of Gromov-Lawson
\cite{gromov.lawson:80} and Schoen-Yau \cite{schoen.yau:79}.\\

\noindent  We focus here on the 
following surgery 
theorem, proved in \cite{ammann.dahl.humbert:08}, which plays a
central role in the whole paper:

\begin{theorem} \label{adh} {\bf(Ammann-Dahl-Humbert; 2008)}
Let $(M,g)$ be a compact $n$-dimensional ($n \geq 3$) Riemannian
manifold and let $M^\#$ be obtained from $M$ by a surgery of dimension $k
\in \{ 0,\cdots,n-3\}$. Then, there exists constants $\beta_{n,k}>0$ with
$\beta_{n,0} = +\infty$ depending only on $n$ and $k$ and metrics $(g_\th)_{\th >0}$ on $M^\#$ such that  
$$\lim_{\th \to 0} \mu(M^\#,[g_\th]) \geq \min(\mu(M,[g]), \beta_{n,k}).$$
In particular, 
$$\sigma(M^\#) \geq \min(\sigma(M), \beta_n)$$
where $\beta_n = \min_{k \in \{0,\cdots,n-3\}} \beta_{n,k}$.
\end{theorem}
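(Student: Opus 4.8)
The claim about $\sigma$ follows formally from the claim about $\mu$, so the plan is to concentrate on the latter. Indeed, fix a conformal class $C=[g]$ on $M$ and produce the family $(g_\th)_{\th>0}$ on $M^\#$; then for each $\th$ one has $\sigma(M^\#)\ge\mu(M^\#,[g_\th])$, whence $\sigma(M^\#)\ge\lim_{\th\to0}\mu(M^\#,[g_\th])\ge\min(\mu(M,[g]),\beta_{n,k})\ge\min(\mu(M,[g]),\beta_n)$. Taking the supremum over $C$ and using that $t\mapsto\min(t,\beta_n)$ is nondecreasing and continuous gives $\sigma(M^\#)\ge\min(\sup_C\mu(M,C),\beta_n)=\min(\sigma(M),\beta_n)$.

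Next I would construct $g_\th$. Recall that $M^\#$ is obtained by removing a tubular neighbourhood $S^k\times B^{n-k}(\Rmax)$ of an embedded sphere $S^k\hookrightarrow M$ and gluing in $B^{k+1}\times S^{n-k-1}$. Choosing $g$-normal coordinates one arranges that on this neighbourhood $g$ is $C^1$-close to the product $g_{S^k}+dr^2+r^2 g_{S^{n-k-1}}$, where $g_{S^m}$ denotes the round metric on $S^m$. One then deforms the metric in three nested regions: $g_\th$ equals $g$ outside the tube; on an intermediate annulus it interpolates to the exact product above while keeping the scalar curvature controlled; and on the core one applies a conformal change by $r^{-2}$ together with a logarithmic change $t=-\ln r$ of the radial variable, which turns $dr^2+r^2 g_{S^{n-k-1}}$ into the cylinder $dt^2+g_{S^{n-k-1}}$ and simultaneously blows up the $S^k$-factor. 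The parameter $\th$ governs the length of this cylindrical neck, equivalently the amount of the conformal blow-up. As $\th\to0$, after suitable rescalings, the metric near the surgery converges in $\Cloc^\infty$ either to a complete product-type metric on $\mR^{k+1}\times\mS^{n-k-1}$ lying in a distinguished one-parameter family (interpolating between Euclidean and cylindrical behaviour on the $\mR^{k+1}$-factor), or — from a secondary rescaling of a concentrating bubble — to the round $\mS^n$. One \emph{defines} $\beta_{n,k}$ as the infimum of the Yamabe quotient $J^n$ over exactly this family of model spaces; $\beta_{n,0}=+\infty$ because for $k=0$ the model $\mR^1\times\mS^{n-1}$ is conformal to a domain in $\mS^n$ and hence never forces a value below $\mu(M,[g])\le\mu(\mS^n)$.

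To prove $\lim_{\th\to0}\mu(M^\#,[g_\th])\ge\min(\mu(M,[g]),\beta_{n,k})$, pick near-minimizers $u_\th$ with $\|u_\th\|_{L^{2n/(n-2)}(M^\#,g_\th)}=1$ and $J^n_{M^\#,g_\th}(u_\th)\le\mu(M^\#,[g_\th])+\th$, and perform a concentration-compactness (profile) analysis of $(u_\th)$ with respect to the collapsing surgery region. The $L^{2n/(n-2)}$-mass splits, up to $o(1)$, into a part carried by the region where $g_\th=g$ and finitely many bubbles forming near the surgery. Transplanting the first part to $M$ after a cut-off — whose error in the $L^2$-gradient and $L^{2n/(n-2)}$ norms is controlled by the smallness of its mass and energy on the transition annulus — contributes at least $\mu(M,[g])$ to $\liminf_\th J^n_{M^\#,g_\th}(u_\th)$; each bubble, identified via weak $\Cloc$-convergence of the rescaled metrics and functions with a nonzero limit solution on a model space, contributes at least $\beta_{n,k}$ (or $\mu(\mS^n)\ge\beta_{n,k}$). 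Since the total $L^{2n/(n-2)}$-mass is $1$, at least one piece is nonzero, and by a Brezis--Lieb type additivity of the numerator the limit functional is at least $\min(\mu(M,[g]),\beta_{n,k})$.

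The principal obstacle is the positivity $\beta_{n,k}>0$ for $k\le n-3$, together with enough uniformity to make the above dichotomy quantitative. One must bound the conformal-Laplacian quotient from below by a positive constant uniformly over the entire family of model metrics on $\mR^{k+1}\times\mS^{n-k-1}$; this is exactly where the hypothesis $n-k-1\ge2$ is used, since then $\Scal_{g_{S^{n-k-1}}}=(n-k-1)(n-k-2)>0$ and the zeroth-order term of $L_g$ dominates on the noncompact model, whereas for $k=n-2$ the $S^1$-factor has vanishing scalar curvature and the infimum collapses to $0$ (this is what restricts surgeries to dimension $\le n-3$). Two further technical points must be handled: the interpolation must keep the negative part of $\Scal_{g_\th}$ small in $L^{n/2}$ on the transition annulus so that this region contributes no loss, and one must verify the asserted $\Cloc^\infty$-convergence of all rescaled metrics as $\th\to0$. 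Finally, the usual bookkeeping of concentration-compactness — ruling out that mass simultaneously escapes to infinity along a neck and concentrates at a point, and summing the contributions of several bubbles — must be carried out in the presence of the degenerating necks.
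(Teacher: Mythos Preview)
This theorem is not proved in the present paper: it is quoted from \cite{ammann.dahl.humbert:08} as background (``proved in \cite{ammann.dahl.humbert:08}, which plays a central role in the whole paper''), so there is no proof here to compare your proposal against. What the paper does contain is an \emph{adaptation} of that argument to manifolds with boundary (the proof of Theorem~\ref{main_surgery}), and from that adaptation one can read off the skeleton of the original ADH proof.

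Your outline is broadly consistent with that skeleton: the explicit construction of $g_\th$ via the conformal change $r^{-2}g$ and the logarithmic variable $t=-\ln r$, the case split according to whether the near-minimizers $u_\th$ concentrate on the neck or stay diffuse, the cut-off transplant back to $(M,g)$ in the diffuse case (this is exactly the ``Subcase II.1.2'' step reproduced in the present paper), and the identification of $\beta_{n,k}$ as an infimum over model metrics on $\mR^{k+1}\times\mS^{n-k-1}$ with $\beta_{n,0}=+\infty$ coming from conformal equivalence with a sphere domain. One point where your sketch is looser than the actual argument: in \cite{ammann.dahl.humbert:08} the minimizers $u_\th$ are taken to be genuine solutions of $L_{g_\th}u_\th=\mu_\th u_\th^{p-1}$ (or of a subcritical approximation), not merely near-minimizers, because the concentration analysis relies on elliptic estimates and a Harnack-type argument rather than a pure profile decomposition; your ``Brezis--Lieb type additivity'' formulation would need to be made precise in that direction. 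But since the paper under review offers no proof of its own for this statement, there is nothing further to grade here.
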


\noindent Now, let $\Om$ be a $(n+1)$-manifold with boundary $M$. 
If $g$ is a metric on $\Om$, we denote by $\partial g$ the metric induced by $g$
on $M$. If $C$ is a conformal class, then $\partial C \definedas \{ \partial
g | g \in C \}$. In particular, if $g$ is a metric on $\Om$, $\partial [g]
= [ \partial g ]$.  Let us fix a conformal class $C$ (resp. $\bar{C}$) on $M$ (resp. $\Om$) such that $\partial \bar{C} =C$. Again, we can define 
$$\mu(\Om,\bar{C}; M,C) = \inf  \int_\Om \Scal_g dv_g$$  
where the infimum is taken over the metrics $g \in \bar{C}$ for which the boundary $M$ is minimal and for which $\Om$ has volume $1$.
This number is called the {\it relative Yamabe constant} and by
Escobar \cite{escobar:92} 
$$\mu(\Om,\bar{C}; M,C) \leq \mu(\mS^{n+1}_+,\mS^n )= 2^\frac{2}{n+1}
\mu(\mS^{n+1}) =  2^\frac{2}{n+1}n(n+1) \om_{n+1}^\frac{2}{n+1}.$$
As in the case of manifolds without boundary, for all metric $g$ on $\Om$ such that $\partial g= h$ and all $u \in C^{\infty}(\Om)$, we set 
$$J^{n+1}_{\Om,g} (u) = \frac{ \int_{\Om} u L_g(u) dv_g +  \int_M H_gu^2 dv_h}{\left(\int_\Om
  |u|^{\frac{2(n+1)}{n-1} } dv_g \right)^{\frac{n-1}{n+1}}}$$
where $H_g$ is the mean curvature of the boundary $M$ with respect to the
metric $g$. Then, it is well known that 

\begin{eqnarray*} 
\mu(\Om,[g];M,[h]) = \inf_{u\in C^{\infty}(\Om); u \not=0} J^{n+1}_{\Om,g}(u).
\end{eqnarray*} 

\noindent If in addition $M$ is minimal for the metric $g$, 
then 
\begin{eqnarray} \label{variational_rel}
\mu(\Om,[g];M,[h]) = \inf_{u\in C^{\infty}(\Om); u \not=0; \pa_{\nu}u=0} J^{n+1}_{\Om,g}(u),
\end{eqnarray} 
\noindent where $\nu$ is the outer normal unit vector field on $M$. \\

\noindent Escobar \cite{escobar:92} studied a Yamabe type problem concerning this conformal invariant. More precisely, he studied the problem of finding in a conformal class metrics with constant scalar curvature for which the boundary is minimal. He proved  

\begin{theorem} \label{escobar} {\bf (Escobar; 1992)}
Let $(\Om,g)$ be a compact $(n+1)$-dimensional manifold for which the boundary $M$ is minimal. 
Assume that $$\mu(\Om,[g]; M,[\partial g]) < \mu(\mS^{n+1}_+,\mS^n ).$$ 
Then, $\mu(\Om,[g]; M,[\partial g])$ is attained. In other words, there exists a metric $g'$ conformal to $g$ for which 
$\Vol_{g'}(\Om)=1$, $\partial g' \in [\partial g]$, $M$ is minimal in $(\Om,g')$ and such that 
$$\mu(\Om,\bar{C}; M,C) = \int_\Om \Scal_{g'} dv_{g'}.$$
\end{theorem}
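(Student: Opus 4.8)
The plan is to follow the classical Yamabe/Escobar strategy of solving a subcritical variational problem and passing to the limit, using the strict inequality hypothesis to rule out concentration. Concretely, for $q$ slightly below the critical exponent $\frac{2(n+1)}{n-1}$, consider the functional
\[
J_{\Om,g}^{q}(u) = \frac{\int_{\Om} u L_g(u)\,dv_g + \int_M H_g u^2\,dv_h}{\left(\int_\Om |u|^{q}\,dv_g\right)^{2/q}}
\]
on the space of functions with $\pa_\nu u = 0$ on $M$ (using that $M$ is minimal, so $H_g = 0$ and \eref{variational_rel} applies), and set $\mu_q$ to be its infimum. First I would show that for each subcritical $q$ the infimum $\mu_q$ is attained by a smooth positive function $u_q$, by the direct method: minimizing sequences are bounded in $H^1(\Om)$, and the Rellich--Kondrachov embedding $H^1(\Om)\embed L^q(\Om)$ is compact for subcritical $q$, so weak limits are genuine minimizers; elliptic regularity (with the Neumann-type boundary condition) and the maximum principle then give smoothness and positivity. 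Normalizing $\int_\Om u_q^{q}\,dv_g = 1$, the Euler--Lagrange equation reads $L_g u_q = \mu_q u_q^{q-1}$ in $\Om$ with $\pa_\nu u_q = 0$ on $M$, and one checks $\limsup_{q\to \frac{2(n+1)}{n-1}} \mu_q \le \mu(\Om,[g];M,[\partial g])$.

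The heart of the argument is the uniform a priori bound: I would show that the family $(u_q)$ is uniformly bounded in $L^\infty(\Om)$ (equivalently, bounded in $H^1$ and non-concentrating) as $q$ increases to the critical exponent, \emph{provided} $\mu(\Om,[g];M,[\partial g]) < \mu(\mS^{n+1}_+,\mS^n)$. This is the standard concentration-compactness dichotomy argument: if $\|u_q\|_\infty \to \infty$, one rescales around the points of maximal concentration. A concentration point in the interior produces, in the blow-up limit, the standard bubble on $\mR^{n+1}$, forcing $\liminf \mu_q \ge \mu(\mS^{n+1})$; a concentration point on the boundary $M$ produces, because of the Neumann condition $\pa_\nu u = 0$, a bubble on the half-space $\mR^{n+1}_+$ with zero Neumann data, i.e. after reflection the standard bubble on $\mR^{n+1}$ of twice the volume, forcing $\liminf \mu_q \ge \mu(\mS^{n+1}_+,\mS^n) = 2^{2/(n+1)}\mu(\mS^{n+1})$. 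Either way $\liminf \mu_q \ge \mu(\mS^{n+1}_+,\mS^n)$, contradicting the hypothesis. The boundary blow-up analysis — getting the right bubble and the correct constant with the Neumann boundary condition, and ruling out partial concentration on lower-dimensional strata — is the main obstacle, and it is exactly where Escobar's work is needed; I would cite \cite{escobar:92} for the technical core rather than redo it.

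Once the uniform bound is in hand, standard elliptic estimates upgrade it to a uniform $C^{2,\al}$ bound, so a subsequence $u_q \to u$ in $C^2(\Om)$ as $q \to \frac{2(n+1)}{n-1}$, with $u \ge 0$ solving $L_g u = \mu\, u^{\frac{n+3}{n-1}}$ and $\pa_\nu u = 0$ on $M$, where $\mu = \lim \mu_q$. The limit $u$ is not identically zero: the uniform bound prevents the full mass $\int_\Om u_q^{q} = 1$ from escaping, so $\int_\Om u^{\frac{2(n+1)}{n-1}}\,dv_g > 0$; the maximum principle then gives $u > 0$. Hence $u$ is an admissible test function and $\mu = J^{n+1}_{\Om,g}(u) \ge \mu(\Om,[g];M,[\partial g])$, while the construction gave $\mu \le \mu(\Om,[g];M,[\partial g])$, so equality holds and $u$ realizes the infimum. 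Finally, setting $g' = u^{\frac{4}{n-1}} g$, the transformation laws for $L_g$ and for the mean curvature show that $g'$ has constant scalar curvature, that $M$ is minimal for $g'$, and that $\partial g' \in [\partial g]$; rescaling $g'$ by a constant arranges $\Vol_{g'}(\Om) = 1$, and then $\mu(\Om,[g];M,[\partial g]) = \int_\Om \Scal_{g'}\,dv_{g'}$, which is the assertion.
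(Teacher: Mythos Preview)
The paper does not prove Theorem~\ref{escobar}; it is quoted from \cite{escobar:92} and used as a black box in the proof of Theorem~\ref{main_surgery}. Your outline is the classical subcritical-approximation and blow-up strategy that Escobar himself employed, and it is essentially correct.

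One step needs repair. You assert that interior concentration gives $\liminf \mu_q \ge \mu(\mS^{n+1})$ while boundary concentration gives $\liminf \mu_q \ge \mu(\mS^{n+1}_+,\mS^n) = 2^{2/(n+1)}\mu(\mS^{n+1})$, and then conclude ``either way $\liminf \mu_q \ge \mu(\mS^{n+1}_+,\mS^n)$''. As stated this is a non sequitur: if the half-sphere constant were $2^{2/(n+1)}\mu(\mS^{n+1}) > \mu(\mS^{n+1})$, the interior bound would be the weaker one and would not by itself contradict the hypothesis. The correct relation is $\mu(\mS^{n+1}_+,\mS^n) = 2^{-2/(n+1)}\mu(\mS^{n+1})$ --- apply Proposition~\ref{basic2} to $\Om=\mS^{n+1}_+$, whose double is $\mS^{n+1}$ with the constant function as symmetric minimizer; the exponent $2^{2/(n+1)}$ displayed in the introduction is a misprint --- so the half-sphere constant is the \emph{smaller} of the two thresholds, and then your ``either way'' conclusion is legitimate. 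With that correction the sketch goes through.
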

\noindent One can verifies that the metric $g'$ of the above theorem has constant scalar curvature. Since it is conformal to $g$,
it has the form $g'=u^{\frac{4}{n-1}} g$ for some positive smooth function
$u$ on $\Om$ that we can normalized by $\int_{\Om} u^p dv_g =1$ where $p \definedas \frac{2(n+1)}{n-1}$.  Then, 
$$\mu (\Om,[g]; M,[\partial g]) = J^{n+1}_{\Om,g} (u).$$
Writing the Euler equation of $u$, we obtain:
\[\left\{ \begin{array}{ccc} 
L_{g'} u =  \mu(\Om,\bar{C}; M,C) u^{p-1}  & \hbox{ on } & \stackrel{\circ}{\Om} \\
\partial_\nu u = 0 & \hbox{ on } & \partial \Om =M.
\end{array} \right. \]
 
\noindent Now, as in the case of manifolds without boundary, one defined the {\it relative Yamabe invariant} by 
$$\sigma(\Om;M,C) = \sup_{C'} \mu(\Om,C' ; M,C)$$
where the supremum runs over all conformal classes of metrics $C'$ on $\Om$ for which $\partial C'= C$.
This invariant is related to the topology of the set of metrics with positive scalar curvature. It was studied for example by Akutagawa and Botvinnik. In particular, they proved in \cite{akutagawa.botvinnik:02a} the following result:

\begin{theorem} \label{ab} {\bf (Akutagawa, Botvinnik; 2002)} 
Let $\Om_1$, $\Om_2$ be $(n+1)$-dimensional manifolds with respective boundaries $M_1 \amalg M$ and $M_2 \amalg M$ ($M_1$ and $M_2$ are possibly empty). Let $C_1,C_2,C$ be conformal classes of metrics respectively on $M_1$, $M_2$ and $M$ and let $\Om$ be the manifold with boundary $M_1 \amalg M_2$ obtained by gluing $\Om_1$ and $\Om_2$ along $M$. Assume that $\sigma(\Om_i;M_i \amalg M, C_i \amalg C) >0$ for $i=1,2$. Then, $\sigma(\Om; M_1 \amalg M_2, C_1 \amalg C_2) >0$ where by convention, $\sigma (\Om; \emptyset) = \sigma(M)$.   
\end{theorem}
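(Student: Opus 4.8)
The plan is to reduce the theorem to the construction of a single metric on $\Om$. By definition $\si(\Om;M_1\amalg M_2,C_1\amalg C_2)>0$ as soon as $\mu(\Om,C';M_1\amalg M_2,C_1\amalg C_2)>0$ for some conformal class $C'$ with $\pa C'=C_1\amalg C_2$, and, by the standard relation between the sign of a relative Yamabe constant and the sign of the first eigenvalue of the Yamabe operator with the conformal (Robin) boundary condition, this inequality depends only on $[g]$ for $g\in C'$, and holds exactly when $J^{n+1}_{W,g}(u)>0$ for every $u\not\equiv0$. Call a metric $g$ with this property \emph{admissible} (for the boundary data $\pa g$). Then $\si(W;\pa W,C_\pa)>0$ iff $W$ carries an admissible metric inducing a metric of $C_\pa$ on $\pa W$, and the theorem becomes: admissible metrics on $\Om_1$ and $\Om_2$ yield an admissible metric on $\Om$.

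The first step is a normalisation on each $\Om_i$. Starting from an admissible metric $g$, let $u>0$ be the first eigenfunction of the relative Yamabe operator (positive by the Krein--Rutman theorem) and replace $g$ by $u^{4/(n-1)}g$; the new metric is conformal to $g$, hence still admissible, and now has $\Scal>0$ and minimal boundary (alternatively invoke Theorem~\ref{escobar}). Since a positive function on $\pa\Om_i$ extends to a positive function on $\Om_i$ with vanishing normal derivative, and a conformal change by such a factor preserves both admissibility and minimality of the boundary, I may then prescribe the metric induced on $M$ to be any chosen representative $h_0\in C$, while keeping the metric induced on $M_i$ inside $C_i$. Carrying this out on $\Om_1$ and on $\Om_2$ with the \emph{same} $h_0$ produces admissible metrics $g_1,g_2$, with minimal boundary and $\pa g_1|_M=\pa g_2|_M=h_0$.

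The second step is the gluing, and is the crux. I want to deform $g_1$ and $g_2$ near $M$, keeping them admissible, so that on collars $M\times(-\de,0]$ and $M\times[0,\de)$ of $M$ both become the common model $dr^2+f(r)^2h_0$ for a fixed even, smooth, positive profile $f$ with $f(0)=1$ and $f'(0)=0$. Because $f$ is even, $g\definedas g_1\cup_M g_2$ is then a smooth Riemannian metric on $\Om$, still inducing a metric of $C_i$ on $M_i$; and because
\[
\Scal(dr^2+f^2h_0)=\frac{1}{f^2}\,\Scal_{h_0}-2n\,\frac{f''}{f}-n(n-1)\,\frac{(f')^2}{f^2},
\]
choosing $f''(0)<\Scal_{h_0}/(2n)$ makes this model have positive scalar curvature near $M$, \emph{whatever the sign of} $\Scal_{h_0}$. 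Interpolating between this model near $M$ and the original admissible metric of $\Om_i$ further inside is a Gromov--Lawson type bending: one inserts a thin connecting region and exploits that the relative Yamabe operator of $g_i$ has a strictly positive bottom of spectrum, so that the deformation can be arranged as a small enough perturbation of it for admissibility to persist.

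The last step is to verify that $g$ is admissible on $\Om$. Since $M$ lies in the product collar it is totally geodesic in $(\Om,g)$, and minimal as a boundary component of each of $(\Om_1,g_1)$ and $(\Om_2,g_2)$; hence, for $u\in C^\infty(\Om)$, writing $u_i\definedas u|_{\Om_i}$ and integrating by parts, the numerator of $J^{n+1}_{\Om,g}(u)$ equals the sum of the numerators of $J^{n+1}_{\Om_1,g_1}(u_1)$ and $J^{n+1}_{\Om_2,g_2}(u_2)$, the contributions along $M$ cancelling (the normals, and the second fundamental forms of $M$, being opposite from the two sides). Each of these two numerators is positive whenever the corresponding $u_i\not\equiv0$, because $g_i$ is admissible; hence the sum is positive for every $u\not\equiv0$, i.e.\ $g$ is admissible, so $\si(\Om;M_1\amalg M_2,C_1\amalg C_2)\ge\mu(\Om,[g];M_1\amalg M_2,C_1\amalg C_2)>0$. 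The cases $M_i=\emptyset$, where $\si(\Om_i;M,C)$ is of the one--boundary or closed type and $\si(\Om;\emptyset)=\si(M)$ by convention, are covered by the same argument. I expect the bending step of the third paragraph to be the main obstacle: unlike the surgery of Theorem~\ref{adh}, which is in codimension $\ge 3$, here one glues along a codimension--one submanifold $M$ whose conformal class $C$ may satisfy $\mu(M,C)\le0$, so the collar model cannot be taken to be a Riemannian product and one must argue directly that the deformation does not destroy positivity of the relative Yamabe operator.
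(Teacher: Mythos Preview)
The paper does not contain a proof of this theorem: it is stated as a background result and attributed to Akutagawa--Botvinnik \cite{akutagawa.botvinnik:02a}, so there is no ``paper's own proof'' to compare against. I can therefore only assess your outline on its own merits and against what the paper does use from \cite{akutagawa.botvinnik:02a}.

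Your steps 1, 2 and 4 are correct. The reduction to an admissible metric, the conformal normalisation to positive scalar curvature with minimal boundary and prescribed $h_0\in C$ on $M$, and the splitting of the numerator of $J^{n+1}_{\Om,g}$ along a totally geodesic hypersurface are all standard and valid; in particular, once $M$ is totally geodesic the mean-curvature boundary term on $M$ vanishes and the two numerators genuinely add.

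The gap is exactly where you locate it: the collar deformation of step~3. Your heuristic that the interpolation is a ``small enough perturbation'' is not justified as written. After the normalisation, $g_i$ near $M$ has the form $dr^2+h_0+rA_i+O(r^2)$ with $\tr_{h_0}A_i=0$ but $A_i$ generically nonzero (minimal $\neq$ totally geodesic), so $g_i$ and your warped model differ at first order in $r$; interpolating on a collar of width $\de$ then produces second derivatives of order $\de^{-1}$, and the scalar curvature of the interpolated metric need not stay bounded, let alone positive. The Gromov--Lawson bending you invoke relies on codimension $\ge 3$ and does not transfer directly to this codimension-one situation.

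What actually closes this gap---and is precisely the tool the present paper borrows from \cite{akutagawa.botvinnik:02a} in the proof of Theorem~\ref{main_surgery}---is the deformation-to-product result (Theorem~4.6 of \cite{akutagawa.botvinnik:02a}, or Carr \cite{carr:88}): given $g_i$ with $\mu(\Om_i,[g_i];\partial\Om_i,[\partial g_i])>0$, one can find $\tilde g_i$ with $\partial\tilde g_i=\partial g_i$, equal to the honest product $h_0+dr^2$ near $M$, and with $\mu(\Om_i,[\tilde g_i];\cdot)$ arbitrarily close to $\mu(\Om_i,[g_i];\cdot)$, hence still positive. With that in hand your step~4 goes through verbatim (no warped profile $f$ is needed), and the theorem follows. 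So your outline is the right one; the missing ingredient is not a new idea but the citation of this collar-straightening lemma in place of the informal bending argument.
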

\noindent This paper aims to obtain a surgery formula for the relative Yamabe invariant
similar to the one in Theorem \ref{adh} for the standard Yamabe invariant. More precisely, we prove

\begin{theorem} \label{main}
Let $n\in \mN$, $n \geq 2$ and $(\Om,g)$ be a compact $(n+1)$-dimensional  
Riemannian manifold with boundary $M$. We set $h \definedas
\partial g$.  Let also $k \in
\{0,\cdots, n-2 \}$ and $M^\#$ be obtained from $M$ by a surgery of
dimension $k$.  We denote by  $\Om^\#$  the
manifold with boundary $M^\#$ obtained
from $\Om$ by attaching the corresponding $(k+1)$-dimensional handle along
$M$.
Then, there exist some constants $\alpha_{n,k} >0$ depending only on $n$
and $k$ and a sequence of metrics $(g_\th)_{\th>0}$  on $\Om^\#$ such that,
setting $h_\th = \partial g_\th$
\begin{eqnarray} 
\lim_{\th \to 0} \mu(\Om^\#, [g_\th]; M^\#,[h_\th]) \geq \min(\mu(\Om,[g];M,[h]),\al_{n,k}).
\end{eqnarray}
If in addition, $ n \geq 3$ and $k \leq n-3$, the metrics $h_\th$
coincide with the metrics given by Theorem \ref{adh}. In
other words, there exists a constant $\beta_{n,k}>0$ depending only on $n$
and $k$ (the same as in
Theorem \ref{adh}) such that  
\begin{eqnarray} 
\lim_{\th \to 0} \mu(M^\#,[h_\th]) \geq \min(\mu(M,[h]),\beta_{n,k}).
\end{eqnarray} 
Moreover, for $k=0$, we can assume that
\begin{eqnarray} \label{k=0}
\alpha_{n,0} = \beta_{n,0} = + \infty.
\end{eqnarray}
\end{theorem}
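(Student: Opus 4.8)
The plan is to mimic the surgery construction of Theorem \ref{adh} (Ammann--Dahl--Humbert) in the relative setting, carrying the cylindrical-model analysis one step further so that it sees the extra boundary term $\int_M H_g u^2\, dv_h$ appearing in $J^{n+1}_{\Om,g}$. Since the surgery is performed on $M$ away from the handle attaching region's interior, the handle attached to $\Om$ along $M$ has a natural product-like collar $M\times[0,\ep)$, and near the surgery sphere $S^k\subset M$ the metric $g$ can be arranged, after a small conformal/isotopy adjustment that does not decrease $\mu$ in the limit, to be a product of a neighborhood in $M$ with an interval. Thus the local model for $\Om^\#$ near the new piece is $\mR^{k+1}\times \mS^{n-k-1}\times[0,\ep)$ (or a half-space version), i.e. essentially the ADH model cross an interval with minimal boundary. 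The first step is therefore to set up this model metric $g_\th$ on $\Om^\#$ explicitly, with $h_\th=\partial g_\th$ literally equal to the ADH metric on $M^\#$ when $n\ge 3$ and $k\le n-3$; for the new range $k=n-2$ (where $M^\#$ has dimension $n$ and the surgery on $M$ is codimension $2$) one uses that $\Om^\#$ still has dimension $n+1\ge 3$, so the ADH-type estimate on the $(n+1)$-manifold still applies even though $\beta_{n,n-2}$ would be undefined on $M$ itself.

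Next I would run the standard limit argument: suppose $u_\th$ is a sequence of test functions (or Yamabe minimizers, normalized in $L^p$ with $p=\frac{2(n+1)}{n-1}$, satisfying $\pa_\nu u_\th=0$) achieving $\mu(\Om^\#,[g_\th];M^\#,[h_\th])$ up to $o(1)$, and assume for contradiction that $\liminf_\th \mu(\Om^\#,[g_\th];M^\#,[h_\th]) < \min(\mu(\Om,[g];M,[h]),\al_{n,k})$. One shows the $L^p$-mass of $u_\th$ cannot concentrate away from the handle region (else one recovers $\mu(\Om,[g];M,[h])$ in the limit by a removal-of-singularity/weak-limit argument on $\Om$, using \eqref{variational_rel}), and cannot concentrate in the handle region (else a blow-up/rescaling produces a nonzero solution on the model space $\mR^{k+1}\times\mS^{n-k-1}\times\mR$ or its half-space analogue with minimal boundary, whose relative Yamabe-type energy is bounded below by a positive constant — this constant is $\al_{n,k}$). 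The case-splitting is the same dichotomy as in ADH, with the Neumann condition handled by doubling across $M^\#$ or by using the reflection that turns the relative functional into an ordinary Yamabe functional on the doubled manifold. For $k=0$ the surgery disconnects/reconnects along $S^0$, and as in ADH the local model is a genuine cylinder $\mR\times\mS^{n-1}\times[0,\ep)$ with no concentration possible, giving $\al_{n,0}=+\infty$; combined with $\beta_{n,0}=+\infty$ this yields \eqref{k=0}.

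The main obstacle I expect is the boundary mean-curvature term and the Neumann condition: in ADH there is no boundary, so the delicate part here is (i) arranging that after the conformal normalization the boundary $M^\#$ stays minimal for $g_\th$ (so that the clean variational characterization \eqref{variational_rel} with $\pa_\nu u=0$ is available), and (ii) controlling $\int_{M^\#} H_{g_\th} u_\th^2\, dv_{h_\th}$ uniformly in $\th$ during the blow-up — one must show this boundary integral does not contribute a negative term that could violate the lower bound, which is where the product structure of the collar and the minimality of the boundary in the model are essential. A secondary technical point is the bookkeeping needed to ensure $h_\th$ is \emph{exactly} the ADH metric (not merely asymptotic to it), which forces the collar and the conformal factors to be chosen compatibly with the ADH construction rather than re-deriving them; this is a matter of carefully quoting the construction of \cite{ammann.dahl.humbert:08} rather than a new difficulty, but it must be done with care so that both inequalities in the statement hold simultaneously for the \emph{same} sequence $(g_\th)$.
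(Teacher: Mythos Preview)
Your plan is correct in outline and matches the paper's strategy closely: arrange $g$ to be a product near $M$, build $g_\th$ so that $h_\th$ is literally the ADH metric on $M^\#$, check that $M^\#$ is minimal for $g_\th$, and then run a concentration/non-concentration dichotomy via doubling. The difference is mainly in how the concentration case is handled and where $\al_{n,k}$ comes from.

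You propose that in the concentration case a blow-up should produce a solution on a half-space model $\mR^{k+1}\times\mS^{n-k-1}\times[0,\infty)$ with Neumann boundary, and that $\al_{n,k}$ should be extracted from this new model. That would work, but it requires a separate analysis of a boundary model that is not in the literature. The paper avoids this entirely: after doubling $\Om^\#$ across $M^\#$ to obtain the closed manifold $X^\#$, the doubled metric $\bar g_\th$ is \emph{exactly} the ADH metric on $X^\#$ built from the doubled data $(X,\bar g)$. The reflected minimizer $\bar u_\th$ is smooth on $X^\#$ (Proposition \ref{basic}) and solves $L_{\bar g_\th}\bar u_\th = 2^{2/(n+1)}\la_\th\,\bar u_\th^{(n+3)/(n-1)}$, so the ADH case analysis applies verbatim in dimension $n+1$. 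In every case except the non-concentration Subcase II.1.2 one gets $2^{2/(n+1)}\la_\infty \ge \beta_{n+1,k}$, so one simply \emph{defines} $\al_{n,k}\definedas 2^{-2/(n+1)}\beta_{n+1,k}$; no half-space model is needed. The only place where the boundary must be tracked explicitly is Subcase II.1.2, and there the point is that the ADH cut-off $\chi$ depends only on $r$, hence $\pa_\nu(\chi\bar u_\th)=0$ on $M$, so $\chi\bar u_\th$ is admissible in Proposition \ref{basic2} and yields $\mu(\Om,[g];M,[h])\le \la_\infty$. Your concern about controlling $\int_{M^\#}H_{g_\th}u_\th^2\,dv_{h_\th}$ is therefore moot: once minimality of $M^\#$ in $(\Om^\#,g_\th)$ is checked, that term vanishes identically and never re-enters the argument.
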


\noindent This theorem is an equivalent of Theorem \ref{adh} for manifolds
with boundary. Adapting such surgery results on manifolds with boundary has
already be done and
Theorem \ref{main} is in the spirit of the results in \cite{gajer:87},
\cite{dahl:06} or \cite{andersonn:08}.  
A first corollary of our theorem
is:

\begin{cor} \label{main_cor}
Let $n \geq 2$ and let $\Om$ be a $(n+1)$-dimensional compact manifold with boundary $M$ and
let $\Om^\sharp$ be obtained by adding a $(k+1)$-dimensional handle on $M$
for some $k \in \{0,\cdots,n-2\}$. Let $C$ be a conformal class on $M$. We note $M^\#= \partial \Om^\#$ which is obtained from $M$ by a surgery of dimension $k$. Then there exists a conformal class $C^\#$ on $M^\#$ such that 
$$\si(\Om^\#;M^\#,C^\#) \geq \min( \si(\Om;M,C), \al_n)$$
where 
$$\al_n \definedas \min_{k \in \{0, \cdots n-2\}} \al_{n,k}$$ 
and where $\al_{n,k}$ is as in Theorem \ref{main}.
If in addition, $n \geq 3$ and $k \not= n-2$, for all $\ep >0$, we can choose $C^\#$ so that  
$$\mu(M^\#, C^\#) \geq \min(\mu(M,C),\beta_n   ) - \ep$$
where 
$$\beta_n \definedas \min_{k \in \{0, \cdots n-3\}} \beta_{n,k}$$ 
and where $\beta_{n,k}$ is as in Theorem \ref{adh}.
\end{cor}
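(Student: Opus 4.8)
\medskip
\noindent\textbf{Proof proposal.}
I would deduce this from Theorem~\ref{main} by the device that turns Theorem~\ref{adh} into $\sigma(M^\#)\ge\min(\sigma(M),\beta_n)$ in the closed case, the one genuinely new point being that the boundary conformal class is now fixed. First, $\sigma(\Om;M,C)$ is finite, since by Escobar's inequality $\mu(\Om,C';M,C)\le\mu(\mS^{n+1}_+,\mS^n)$ for every $C'$. Then I would fix once and for all a representative $h\in C$; the surgery producing $M^\#$ and the metrics $h_\th=\pa g_\th$ on $M^\#$ supplied by Theorem~\ref{main} (which for $n\ge3$, $k\le n-3$ are the metrics of Theorem~\ref{adh}) are built from $h$ alone near the surgery sphere, independently of how $h$ is extended over $\Om$.

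Next, given any conformal class $\bar C$ on $\Om$ with $\pa\bar C=C$, I would choose a representative $g\in\bar C$ with $\pa g=h$: if $g_1\in\bar C$ has $\pa g_1=v^{4/(n-1)}h$ for a positive $v\in C^\infty(M)$, put $g\definedas u^{4/(n-1)}g_1$ with $u\in C^\infty(\Om)$ positive and $u|_M=v^{-1}$; then $\mu(\Om,[g];M,[h])=\mu(\Om,\bar C;M,C)$. Applying Theorem~\ref{main} to $(\Om,g)$ gives metrics $g_\th$ on $\Om^\#$ with $\pa g_\th=h_\th$ and
$$\lim_{\th\to0}\mu\big(\Om^\#,[g_\th];M^\#,[h_\th]\big)\ \ge\ \min\big(\mu(\Om,\bar C;M,C),\,\al_{n,k}\big).$$
Since $[g_\th]$ is a conformal class on $\Om^\#$ inducing $[h_\th]$ on the boundary, $\sigma(\Om^\#;M^\#,[h_\th])\ge\mu(\Om^\#,[g_\th];M^\#,[h_\th])$ for each $\th$, and the family $(h_\th)$ does not depend on $\bar C$. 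Taking the supremum over all such $\bar C$ and then $\th\to0$ should yield, for a suitable conformal class $C^\#=C^\#(h)$ on $M^\#$,
$$\sigma(\Om^\#;M^\#,C^\#)\ \ge\ \min\big(\sigma(\Om;M,C),\,\al_{n,k}\big)\ \ge\ \min\big(\sigma(\Om;M,C),\,\al_n\big),$$
the convention $\al_{n,0}=+\infty$ from \eref{k=0} taking care of $k=0$.

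The step I expect to be the main obstacle is this last one: producing a \emph{single} $C^\#$ and not merely an $\ep$-dependent one. As $\th$ varies the classes $[h_\th]$ genuinely vary (surgery metrics with different neck scales are not conformal), so a fixed $C^\#$ is tested only by those $\th$ with $[h_\th]=C^\#$, whereas the estimate of Theorem~\ref{main} is asymptotic in $\th$. To close the gap one must run the construction of Theorem~\ref{main} with the surgery on $M$ carried out at one fixed scale, the degenerating parameter $\th$ affecting only how the $(k+1)$-handle enters the interior of $\Om^\#$; then all $h_\th$ lie in one class $C^\#$ and the limit above can be replaced by $\sigma(\Om^\#;M^\#,C^\#)$ itself. (In the closed case of Theorem~\ref{adh} this never arises, since $\sigma(M^\#)$ is already a supremum over all conformal classes.) Absent such a decoupling one only gets $\sigma(\Om^\#;M^\#,C^\#_\ep)\ge\min(\sigma(\Om;M,C),\al_n)-\ep$.

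Finally, for the second assertion ($n\ge3$, $k\le n-3$) no such subtlety occurs: Theorem~\ref{main} additionally gives $\lim_{\th\to0}\mu(M^\#,[h_\th])\ge\min(\mu(M,[h]),\beta_{n,k})$, and $\mu(M,[h])=\mu(M,C)$ since $h\in C$, so for $\th$ small (depending on $\ep$) the class $C^\#\definedas[h_\th]$ satisfies $\mu(M^\#,C^\#)\ge\min(\mu(M,C),\beta_{n,k})-\ep\ge\min(\mu(M,C),\beta_n)-\ep$, with $\beta_{n,0}=+\infty$ handling $k=0$.
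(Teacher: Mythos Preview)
The paper does not prove this corollary; it is stated as an immediate consequence of Theorem~\ref{main}. Your argument is exactly the natural deduction, and the key observation you make --- that the boundary metrics $h_\th=\pa g_\th$ depend only on the chosen $h\in C$ and on $\th$, not on the interior extension $g$ --- is correct: in the proof of Theorem~\ref{main_surgery} the metric $g$ is first replaced by one of the form $h+ds^2$ near $M$, so near the boundary the entire construction is determined by $h$ alone, and for $k\le n-3$ the paper states explicitly that $h_\th$ coincides with the ADH metric on $M^\#$ built from $h$.

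You have also put your finger on a genuine subtlety that the paper does not address. From the chain
\[
\liminf_{\th\to0}\sigma(\Om^\#;M^\#,[h_\th])\ \ge\ \liminf_{\th\to0}\mu\big(\Om^\#,[g_\th^{\bar C}];M^\#,[h_\th]\big)\ \ge\ \min\big(\mu(\Om,\bar C;M,C),\al_n\big),
\]
valid for every $\bar C$ with $\pa\bar C=C$, one indeed gets $\liminf_{\th\to0}\sigma(\Om^\#;M^\#,[h_\th])\ge\min(\sigma(\Om;M,C),\al_n)$; but this only says that for each $\ep>0$ some $C^\#=[h_\th]$ satisfies the inequality up to $\ep$, not that a single $C^\#$ achieves it. The paper supplies no argument to bridge this, and since the corollary is invoked afterwards only qualitatively (positivity of the relative invariant, or comparisons already carrying an $\ep$), the discrepancy is harmless for the paper's applications --- the first inequality may well be intended in the same $\ep$-form as the second. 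Your suggested remedy, freezing the boundary surgery scale while letting only the interior handle parameter degenerate, is a reasonable route to the sharp statement but would require re-entering the construction of Theorem~\ref{main_surgery}; it is not something the paper carries out.
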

 
\noindent Among immediate consequence of Corollary \ref{main_cor}, we can
observe that since $\alpha_{n}, \beta_n>0$, 
we obtain a new proof of main Theorem in \cite{gajer:87}. Note that there
was a gap in the 
proof of Gajer which was repaired by Walsh \cite{walsh:08}. 
Another
consequence of Corollary \ref{main_cor} is the main result concerning
relative Yamabe invariant in Schwartz
\cite{schwartz:08} 
which in particular implies that handlebodies have maximal relative Yamabe
invariant among manifolds with boundary.

 
\noindent  We now explain a subtler consequence of our results. Let $g,g'$ be metrics with positive scalar curvature on
$M$. We say that that $g,g'$ are {\it conformally concordant} or {\it $0$-concordant}
if there exists a metric $G$ on $M \times [0,1]$ conformal to a metric
with positive scalar
curvature for which $\partial M \times [0,1]$ is minimal and such that
$\partial [G] = [g] \amalg [g']$. It follows from
\cite{akutagawa.botvinnik:02a}  that
``to be concordant'' is an equivalence relation. The set of equivalence
classes is denoted by $\conc_0(M)$.

\noindent We now define 
\[  \sigma'' \definedas \left| \begin{array}{ccc}
 \conc_0(M) & \to & ] \infty,\sigma(M)] \\
 C & \mapsto &  \min\left(\sup_{g \in C} (\mu(M,[g])),\beta_n\right)
\end{array} \right. \]
where $\beta_n$ is as in Corollary \ref{main_cor} so that 
$$\min(\si(M), \beta_n) = \sup_{C \in  \conc_0(M)} \si''(C).$$  
A hard open question is to know whether $\si$ is attained or not.  A first
step in this direction could be to study whether the supremum above is
attained or not. This is the main motivation here to introduce $\si''$. 
As an application of Theorem \ref{main}, we prove in Section \ref{concordant}

\begin{theorem} \label{application} 
Let $M,N$ be compact $n$-manifolds such that $N$ is obtained from $M$ by a
finite sequence of surgeries of dimension $k \in \{2,\cdots,n-3\}$. 
Then 
$$\sigma'' (\conc_0(M) )= \sigma''(\conc_0(N)).$$
\end{theorem}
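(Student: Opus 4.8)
The plan is to realise the passage from $M$ to $N$ at the level of $0$-concordance classes by an \emph{invertible} surgery correspondence, and to feed the quantitative surgery estimates of Theorems~\ref{adh} and~\ref{main} into that correspondence.

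First I would reduce, by induction on the number of surgeries, to the case $N=M^\#$ obtained from $M$ by a single surgery of dimension $k\in\{2,\dots,n-3\}$ along an embedded sphere $S^k\subset M$. The key structural point is that this range is symmetric: the dual surgery that recovers $M$ from $N$ has dimension $n-1-k$, again in $\{2,\dots,n-3\}$, and the constraint $2\le k\le n-3$ says precisely that \emph{both} the surgery from $M$ to $N$ and its inverse from $N$ to $M$ have codimension $\ge 3$. This is what lets me run all the positive–scalar–curvature/concordance surgery machinery in both directions.

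Next I would set up the surgery maps on concordance classes. Given a metric $g$ of positive scalar curvature on $M$, the Gromov--Lawson construction \cite{gromov.lawson:80}, in the quantitative form underlying Theorems~\ref{adh} and~\ref{main} (for instance the boundary metrics $h_\th$ of Theorem~\ref{main} for $\th$ small), produces a metric $g^\#$ of positive scalar curvature on $N$. I would show that $g\mapsto g^\#$ descends to a map $\Phi\colon\conc_0(M)\to\conc_0(N)$, and that the analogous map $\Psi\colon\conc_0(N)\to\conc_0(M)$ attached to the dual surgery is a two-sided inverse of $\Phi$. Well-definedness and functoriality come from performing the construction on a concordance $M\times[0,1]$ --- this is where Theorem~\ref{main}, applied to the cylinder with a surgery on a boundary component, enters --- while the identities $\Psi\circ\Phi=\id$ and $\Phi\circ\Psi=\id$ hold because a surgery immediately followed by its dual is a product cobordism, so the round-trip metric on $M\times[0,1]$ is a $0$-concordance between $g$ and $\Psi(\Phi(g))$; the codimension $\ge 3$ hypothesis in both directions is exactly what makes this possible, in the spirit of \cite{akutagawa.botvinnik:02a}. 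I expect this step --- making the surgery correspondence on $\conc_0$ precise and genuinely invertible in the conformal, minimal-boundary setting --- to be the main obstacle.

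Then comes the quantitative input and the conclusion. Write $\Lambda(C)\definedas\sup_{g\in C}\mu(M,[g])$, so that $\si''(C)=\min(\Lambda(C),\be_n)$. Fix $C\in\conc_0(M)$ and $\ep>0$, and pick $g\in C$ with $\mu(M,[g])\ge\Lambda(C)-\ep$. Since $k\le n-3$, Theorem~\ref{adh} (equivalently the boundary estimate in Theorem~\ref{main}) applied to $(M,g)$ gives metrics $g_\th$ on $N$ with $\lim_{\th\to0}\mu(N,[g_\th])\ge\min(\mu(M,[g]),\be_{n,k})\ge\min(\mu(M,[g]),\be_n)$, and by construction $[g_\th]\in\Phi(C)$ for $\th$ small; hence $\Lambda(\Phi(C))\ge\min(\mu(M,[g]),\be_n)\ge\min(\Lambda(C),\be_n)-\ep$, and letting $\ep\to0$ yields $\Lambda(\Phi(C))\ge\min(\Lambda(C),\be_n)$. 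The same argument for the dual surgery gives $\Lambda(\Psi(D))\ge\min(\Lambda(D),\be_n)$ for all $D\in\conc_0(N)$. Now, for $C\in\conc_0(M)$ put $D=\Phi(C)$, so that $\Psi(D)=C$; the two inequalities $\Lambda(D)\ge\min(\Lambda(C),\be_n)$ and $\Lambda(C)=\Lambda(\Psi(D))\ge\min(\Lambda(D),\be_n)$ force $\min(\Lambda(C),\be_n)=\min(\Lambda(D),\be_n)$, i.e.\ $\si''(\Phi(C))=\si''(C)$. Thus $\si''(\conc_0(M))\subseteq\si''(\conc_0(N))$, and the symmetric argument with $\Psi$ in place of $\Phi$ gives the reverse inclusion, so $\si''(\conc_0(M))=\si''(\conc_0(N))$.
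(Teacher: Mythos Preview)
Your overall strategy is exactly the paper's: build a bijection $\Phi=\Theta^0_{M,N}$ on $\conc_0$ via the surgery metrics of Theorems~\ref{adh} and~\ref{main}, prove it is invertible using the dual surgery, and then run the quantitative estimate. Your final paragraph is a clean repackaging of the paper's proof of Corollary~\ref{cor1} (the paper phrases it via the compatibility relation~\eref{s3} for the maps $\Theta^c_{M,N}$ at varying thresholds $c$, but your direct inequality $\Lambda(\Phi(C))\ge\min(\Lambda(C),\be_n)$ is equivalent and arguably more transparent).

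There is, however, one genuine inaccuracy in your invertibility step. You assert that ``a surgery immediately followed by its dual is a product cobordism.'' This is false in general: if you attach the $(k+1)$-handle to $M\times[0,1]$ and then the dual $(n-k)$-handle on the resulting $N$-boundary, the composite $\Om''$ is $(M\times[0,1])\cup_{S^k\times B^{n-k}}(B^{k+1}\times S^{n-k})$, which is \emph{not} diffeomorphic to $M\times[0,1]$. What the paper actually proves (Lemma~\ref{surgery_cyl_2}) is that $M\times[0,1]$ is obtained from $\Om''$ by an \emph{interior} $(n-k)$-dimensional surgery; likewise, for well-definedness one attaches handles on both ends of the cylinder and then uses Lemma~\ref{surgery_cyl_1}, which says $N\times[0,1]$ is obtained from that cobordism by an interior $(k+1)$-dimensional surgery. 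The metric is then pushed across these interior surgeries by Theorem~\ref{main_surgery_interior} (Gromov--Lawson/Schoen--Yau), and \emph{this} is where the two-sided constraint $2\le k\le n-3$ is actually used: the interior $(k+1)$-surgery has codimension $n-k\ge 3$ exactly when $k\le n-3$, and the interior $(n-k)$-surgery has codimension $k+1\ge 3$ exactly when $k\ge 2$. So your intuition that ``codimension $\ge 3$ in both directions is exactly what makes this possible'' is right, but it enters through these interior surgeries on the composite cobordisms, not through the cobordisms being products. Once you replace your product-cobordism claim by Lemmas~\ref{surgery_cyl_1}--\ref{surgery_cyl_2} plus Theorem~\ref{main_surgery_interior}, your argument goes through and coincides with the paper's.
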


\smallskip

\noindent {\bf Acknowledgements:} The author is very grateful to Bernd
Ammann, Mattias Dahl and Julien Maubon for many helpful discussions and
comments. 



\section{Surgery} \label{surgery}
In this section, we prove Theorem \ref{main}. In this goal, we give some
basic facts on the double of manifolds with boundary which will be used later. We also give the definitions of surgery and attachment of
handles. The last Paragraph \ref{surgery_cyl} is devoted to the proof of
two lemmas which will be helpful in Section \ref{concordant}. 

\subsection{The double of a manifold with boundary} \label{double}
Let $\Om$ be a compact $(n+1)$-dimensional manifold with boundary $M$. The {\it
  double of $M$} is the compact  manifold without boundary $X \definedas \Om \cup_M
\Om$ obtained by gluing two copies of $\Om$ along their common
boundary. 
Let $g$ be a
metric on $\Om$ and let $h\definedas \partial g$ be the induced metric on
the boundary $M$. Assume that $g$ is a product metric near the boundary
$M$, i.e. that $g$ has the form $g= h + ds^2$, $s$ being the distance to $M$. Then $g$ extends naturally 
to a smooth metric $\bar{g}\definedas g \cup g$ on $X$. We will need the following basic results:

\begin{prop} \label{basic}
Let $u \in C^\infty(\Om)$, $u \geq 0$  be a non-negative function which satisfies:
\[\left\{ \begin{array}{ccc} 
L_g u =  \la u^p  & \hbox{ on } & \stackrel{\circ}{\Om} \\
\partial_\nu u = 0 & \hbox{ on } & \partial \Om =M,
\end{array} \right. \]
for some $\la \in \mR$ and some $p \geq 1$. The function $\bar{u}=u \cup u$ is smooth on $X$. 
\end{prop}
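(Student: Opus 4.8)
The plan is to reduce everything to a product collar of $M$, use the equation there to show that every odd-order normal derivative of $u$ vanishes along $M$, and then invoke the standard fact that the even reflection of such a function is smooth.

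First I would dispose of a degenerate possibility. Since $u\ge 0$ satisfies the linear elliptic equation $\frac{4n}{n-1}\Delta_g u+\bigl(\Scal_g-\la u^{p-1}\bigr)u=0$, whose zeroth-order coefficient $\Scal_g-\la u^{p-1}$ is bounded (because $u$ is continuous on the compact $\Om$ and $p-1\ge 0$), the strong maximum principle gives $u>0$ in the interior unless $u\equiv 0$; and the Hopf boundary point lemma together with $\partial_\nu u=0$ rules out a zero of $u$ on $M$ as well. If $u\equiv 0$ then $\bar u\equiv 0$ is trivially smooth, so from now on I assume $u>0$ on all of $\Om$. Then $t\mapsto t^{p}$ is smooth on the range of $u$, and $u\mapsto\la u^{p}$ may be treated as a smooth function of the smooth positive function $u$.

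Next, fix a collar $M\times[0,\ep)$ on which $g=h+ds^2$ with $s=\dist(\cdot,M)$. On this collar $g$ is invariant under translation in $s$, so $\Scal_g=\Scal_h$ is independent of $s$ and $\Delta_g=\Delta_h-\partial_s^2$, where $\Delta_h$ only differentiates in the $M$-directions. Hence, on the collar, the equation $L_g u=\la u^{p}$ becomes
$$\partial_s^2 u \;=\; \Delta_h u + G(x,u), \qquad G(x,t)\definedas\tfrac{n-1}{4n}\bigl(\Scal_h(x)\,t-\la\,t^{p}\bigr),$$
with $G$ smooth and with no explicit $s$-dependence. I then claim, by induction on $j\ge 0$, that $\partial_s^{2j+1}u=0$ along $\{s=0\}$. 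For $j=0$ this is exactly the Neumann condition $\partial_\nu u=0$. For the inductive step, differentiating the displayed equation $(2j-1)$ times in $s$ gives $\partial_s^{2j+1}u=\Delta_h\bigl(\partial_s^{2j-1}u\bigr)+\partial_s^{2j-1}\bigl(G(x,u)\bigr)$. Restricting to $s=0$: the first term vanishes because $\partial_s^{2j-1}u=0$ there by the inductive hypothesis; and by Fa\`a di Bruno's formula $\partial_s^{2j-1}(G(x,u))$ is a finite sum of terms $(\partial_t^{m}G)(x,u)\prod_i\partial_s^{k_i}u$ with $k_i\ge 1$ and $\sum_i k_i=2j-1$, so at least one $k_i$ is odd (the sum being odd) with $k_i\le 2j-1$, whence $\partial_s^{k_i}u=0$ at $s=0$ by the inductive hypothesis and the whole term vanishes. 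This proves the claim.

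Finally, I would conclude using the standard fact that the even reflection in $s$ of a function which is smooth up to $\{s=0\}$ and all of whose odd-order $s$-derivatives vanish on $\{s=0\}$ is smooth across $\{s=0\}$. On the collar $M\times(-\ep,\ep)\subset X$ one has $\bar u(x,s)=u(x,|s|)$, which is smooth on each half-collar; for any multi-index $\beta$ and any $k$, the one-sided limits of $\partial_x^{\beta}\partial_s^{k}\bar u$ at $s=0$ from the two sides differ only by the factor $(-1)^{k}$, so they agree for $k$ even automatically, and for $k$ odd because $\partial_s^{k}u$, and hence $\partial_x^{\beta}\partial_s^{k}u$, vanishes identically on $M$ by the claim. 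Away from $M$, $\bar u$ coincides with one of the two smooth copies of $u$, so $\bar u\in C^\infty(X)$. I expect the only point requiring any care to be the combinatorial bookkeeping in the Fa\`a di Bruno step, where the parity observation (an odd sum of positive integers has an odd summand) settles it immediately.
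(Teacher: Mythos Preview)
Your proof is correct, and it takes a genuinely different route from the paper's. The paper argues in two lines: since $\partial_\nu u=0$, the reflection $\bar u$ is $C^1$ across $M$; it then satisfies $L_{\bar g}\bar u=\la\,\bar u^{\,p}$ weakly on all of $X$ (the boundary terms in Green's formula cancel), and standard elliptic regularity bootstraps $\bar u$ up to $C^\infty$. Your approach instead stays on the original $\Om$, uses the product structure of the collar to derive the recursion $\partial_s^2 u=\Delta_h u+G(x,u)$, and proves by induction (with the parity observation in the Fa\`a di Bruno expansion) that every odd normal derivative of $u$ vanishes on $M$, whence the even reflection is smooth by elementary calculus.

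The paper's argument is shorter and conceptually cleaner, but it hides a subtlety that you make explicit: for non-integer $p$ the bootstrap $\bar u\in C^{k}\Rightarrow\bar u^{\,p}\in C^{k}\Rightarrow\bar u\in C^{k+2}$ only works if $\bar u>0$ (or $\bar u\equiv 0$), which is exactly your first step via the strong maximum principle and Hopf lemma. Your argument is more hands-on and avoids invoking elliptic regularity as a black box; it also makes transparent why the product form $g=h+ds^2$ near $M$ is what drives the result. Either way the positivity dichotomy is the only point requiring real care, and you have handled it.
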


\begin{proof}
 Just notice that  $\bar{u} \in C^1(M)$ and satisfies 
$L_{\bar{g}} \bar{u} = \la  \bar{u}^p$ weakly on $X$. Then, $\bar{u} \in C^{\infty}(X)$ by standard elliptic regularity theorems.
\end{proof}

\begin{prop} \label{basic2} 
We have 
$$2^{\frac{2}{n+1}} \mu(\Om,[g]; M,[h]) = \inf 
J^{n+1}_{X,\bar{g}} (u)$$
where the infimum runs over
the non-zero functions $u \in C^{\infty}(X)$ such that $\partial_\nu u
\equiv 0$ on $M$, $\nu$ being any normal vector field on $M$. 
\end{prop}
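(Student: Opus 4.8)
The plan is to prove the two inequalities in the displayed equation separately, writing $\mu:=\mu(\Om,[g];M,[h])$, $p:=\frac{2(n+1)}{n-1}$, $\al:=\frac{n-1}{n+1}$ (so that $2^{\frac 2{n+1}}=2^{1-\al}$), and $\Om_1,\Om_2$ for the two copies of $\Om$ inside $X$. First I record the input from $g$ being a product metric near $M$: the hypersurface $M$ is totally geodesic in $(\Om,g)$, hence $H_g\equiv0$, the doubled metric $\bar g$ is genuinely smooth on $X$, and for $v\in C^\infty(\Om)$ with $\pa_\nu v\equiv0$ on $M$ one has $J^{n+1}_{\Om,g}(v)=\bigl(\tfrac{4n}{n-1}\int_\Om|\na v|^2+\int_\Om\Scal_g v^2\bigr)\big/\bigl(\int_\Om|v|^p\bigr)^\al$. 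For the inequality $\inf J^{n+1}_{X,\bar g}\le 2^{\frac2{n+1}}\mu$: given such a $v$, its even reflection $\bar v:=v\cup v$ is continuous and $C^1$ across $M$ (the two normal derivatives match because both vanish), hence $\bar v\in H^1(X)$, it still satisfies $\pa_\nu\bar v\equiv0$ on $M$, and a direct computation gives $J^{n+1}_{X,\bar g}(\bar v)=\frac{2\int_\Om vL_gv}{(2\int_\Om|v|^p)^\al}=2^{1-\al}J^{n+1}_{\Om,g}(v)$; approximating $\bar v$ by smooth Neumann functions and taking the infimum over $v$, \eqref{variational_rel} gives the claim.

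For the reverse inequality write $\nu:=\inf J^{n+1}_{X,\bar g}$, and for an admissible $u$ set $u_i:=u|_{\Om_i}$ and $b_i:=\int_{\Om_i}|u_i|^p$. If $\mu\le0$ the inequality is elementary: each $u_i$ is Neumann data, so $\tfrac{4n}{n-1}\int_{\Om_i}|\na u_i|^2+\int_{\Om_i}\Scal u_i^2\ge\mu\,b_i^\al$, and since $b_1^\al+b_2^\al\le 2^{1-\al}(b_1+b_2)^\al$ by concavity of $t\mapsto t^\al$, multiplying this by $\mu\le0$ reverses it and gives $J^{n+1}_{X,\bar g}(u)\ge 2^{1-\al}\mu$. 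If $\mu>0$ the same bound only gives $\nu\ge\mu>0$, which is too weak, so one passes to a minimizer. Using Escobar's estimate $\mu\le\mu(\mS^{n+1}_+,\mS^n)$ together with $2^{\frac2{n+1}}\mu(\mS^{n+1}_+,\mS^n)=\mu(\mS^{n+1})$, either $\nu\ge\mu(\mS^{n+1})\ge 2^{\frac2{n+1}}\mu$ and we are done, or $\nu<\mu(\mS^{n+1})$ and the subcritical Yamabe machinery applies: $\nu$ is attained by a positive $w\in C^\infty(X)$ with $\int_Xw^p=1$ solving $L_{\bar g}w=\nu w^{p-1}$. Restricting to the two copies, $w_i$ solves $L_gw_i=\nu w_i^{p-1}$, so $\int_{\Om_i}w_iL_gw_i=\nu b_i$, and hence $J^{n+1}_{\Om,g}(w_i)=\bigl(\nu b_i+Q_i\bigr)/b_i^\al$ where $Q_i=\tfrac{4n}{n-1}\int_M w_i\pa_\nu w_i$ is the boundary term picked up on integrating by parts, with $Q_1=-Q_2$ (because $w$ is smooth across $M$); by \eqref{variational_rel} each is $\ge\mu$. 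Adding the two inequalities $\nu b_i+Q_i\ge\mu b_i^\al$ kills the $Q_i$ and yields $\nu\ge\mu(b_1^\al+b_2^\al)$; if the minimizer can be chosen with $b_1=b_2=\tfrac12$ this is exactly $\nu\ge 2^{1-\al}\mu=2^{\frac2{n+1}}\mu$ (equivalently, if $w$ is $\mZ_2$-symmetric then each $w_i$ is Neumann, $Q_i=0$, and $J^{n+1}_{\Om,g}(w_i)=\nu\,2^{\al-1}\ge\mu$ gives the same conclusion).

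The main obstacle is precisely this last point: obtaining a minimizer that restricts symmetrically to the two copies. Note that smooth functions with $\pa_\nu u\equiv0$ on $M$ are dense in $H^1(X)$ — modify a smooth $\psi$ near $M$ by subtracting $\chi(s/\de)\,s\,(\pa_s\psi)|_{s=0}$ in product coordinates, an $H^1$-small correction killing the normal derivative — so $\nu$ actually equals the ordinary Yamabe constant $\mu(X,\bar g)$, whose minimizer need not a priori be $\mZ_2$-invariant; the crude bound by itself then gives only $\nu\ge\mu$, so the factor $2^{\frac2{n+1}}$ genuinely requires the balanced minimizer. Establishing its existence — via uniqueness of the Yamabe minimizer in the relevant range (whence $\sigma^*w$ proportional to $w$ forces symmetry), or an intermediate-value argument for the mass imbalance $b_1-b_2$ over the compact $\sigma$-invariant set of minimizers, or by checking directly that the double of Escobar's relative Yamabe solution [Theorem~\ref{escobar}] realizes $\mu(X,\bar g)$ — is where the substance of the proof lies; the remaining steps are bookkeeping together with the standard elliptic regularity and concentration–compactness already used in the Yamabe problem.
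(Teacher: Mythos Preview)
The paper's own proof is a single sentence: ``The proof easily follows from \eqref{variational}, \eqref{variational_rel} and the fact that the mean curvature $H_g$ vanishes on $M$.'' That is the entire argument in the paper --- no case distinction, no minimizer, no concavity estimate. What the author evidently has in mind is the bijection between $\sigma$-invariant (i.e.\ reflection-symmetric) test functions on $X$ and Neumann test functions on $\Om$, under which $J^{n+1}_{X,\bar g}(v\cup v)=2^{1-\al}J^{n+1}_{\Om,g}(v)$; taking infima and using \eqref{variational_rel} gives the equality immediately. This is exactly your first paragraph, and it is also exactly how the proposition is applied later in the paper: the test function $\chi\bar u_\th$ used in the proof of Theorem~\ref{main_surgery} is $\sigma$-symmetric by construction.

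Your proposal goes substantially further than the paper, and in doing so you have put your finger on a genuine issue with the proposition \emph{as literally stated}. You correctly observe that smooth functions on $X$ with $\partial_\nu u=0$ on $M$ are dense in $H^1(X)$, so the constrained infimum in the statement is nothing other than $\mu(X,[\bar g])$. The proposition is therefore asserting the identity $\mu(X,[\bar g])=2^{2/(n+1)}\mu(\Om,[g];M,[h])$, and you are right that for $\mu>0$ this does \emph{not} follow from the elementary splitting argument: the concavity inequality $b_1^\al+b_2^\al\le 2^{1-\al}(b_1+b_2)^\al$ goes the wrong way. One genuinely needs a $\sigma$-invariant minimizer (or some substitute), and none of the three mechanisms you list --- uniqueness, an IVT over the set of minimizers, or showing that the double of Escobar's solution is a Yamabe minimizer on $X$ --- is available in general without further work. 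Yamabe minimizers are not unique when $\mu>0$, the minimizer set need not be connected, and the doubled Escobar solution is merely a constant-scalar-curvature metric, not a priori a minimizer.

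So the situation is: your first paragraph already proves the version of the proposition that the paper actually uses (and that its one-line proof supports), namely the equality when the infimum is taken over $\sigma$-symmetric functions. Your additional analysis correctly exposes that the literal statement --- infimum over all Neumann functions --- is a strictly stronger claim whose proof is not ``easy'' and which your outline does not complete. The gap you identify is real, but it is a gap in the paper's statement rather than in what the paper needs.
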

\noindent The proof  easily follows from \eref{variational}, \eref{variational_rel} and the fact that the mean curvature $H_g$ vanishes on 
$M$.

\subsection{Surgeries and attachments of handles} \label{surgatt}
\noindent Let $M$ be a $n$-dimensional manifold and let $k$ be an integer such that 
$0 \leq k \leq n-1$. We assume that an embedding $f:S^k \times B^{n-k} \to \stackrel{\circ}{M}$ is given.
Then, $M \setminus f(S^k \times B^{n-k})$ is a manifold whose boundary is 
diffeomophic to $S^{k} \times S^{n-k-1}$. We then construct 
$$M^\#_f \definedas (M \setminus f(S^k \times B^{n-k}) \cup_{f(S^k \times S^{n-k-1})} (\overline{B^{k+1}} \times S^{n-k-1}).$$ 
We say that {\it $M^\#_f$ is  obtained from $M$ by a surgery of dimension $k$ along $f(S^k \times B^{n-k})$}.
If $M$ has a boundary, we say that {\it $M^\#_f$ is  obtained from $M$ by an interior surgery of dimension $k$} to emphasise the fact that nothing happens on the boundary. \\

\begin{remark} \label{surgery_cancel}
Observe that 
$$M= M^\#_f \setminus  (B^{k+1} \times S^{n-k-1}) \cup_{f(S^k \times S^{n-k-1})} \overline{f(S^k \times B^{n-k})}.$$
In particular, $M$ is obtained from $M^\#_f$ by a $(n-1-k)$-surgery that we will call the {\it dual surgery} of the surgery given by $f$. 
\end{remark}

\noindent Now, let $\Om$ be a $(n+1)$-dimensional differentiable manifold
whose boundary is $M$ and attach the disk $D^{k+1} \definedas \overline{B^{k+1}} \times
\overline{B^{n-k}}$ along $f(S^k \times B^{n-k}) \subset M$ using
$f$. Smoothing the corners, we get a new manifold 
$$\Om^\#_f \definedas \Om \cup_f (\overline{B^{k+1}} \times \overline{B^{n-k}})$$
with $\partial \Om^\#_f = M^\#_f$. We say that {\it $\Om^\#_f$ is obtained from $\Om$ by attachment of a handle of dimension $k+1$}.
The handle corresponding to the dual surgery of the one given by $f$ will be called the {\it the dual handle} of $D^{k+1}$ \\

\noindent Assume that near the boundary $M$ of $\Om$ we have some trivialisation $\Om \sim (M \times ]-2,0])$. 	In other words, $M= \partial \Om$ is identified to $M \times \{ 0 \}$. 
We define the half-balls and the half-spheres
\begin{eqnarray*}
B^{m+1}_- \;( \hbox{ resp. }  B^{m+1}_+ )& \definedas & \{ (y_1,\cdots,y_{m+1}) \in B^{m+1} \subset \mR^{m+1} \; | \; y_{m+1} \leq 0(\hbox{ resp. } \geq  0 )\} \\
S^m_-\; ( \hbox{ resp. }  S^m_+) &\definedas &  \{ (y_1,\cdots,y_{m+1}) \in
S^m \subset \mR^{m+1} \; | \; y_{m+1} \leq 0 (\hbox{ resp. } \geq  0) \}. \\
\end{eqnarray*}
and we set 
\[ F : \left| \begin{array}{ccc}
S^k \times \overline{B^{n+1-k}_-} & \to & M \times ]-2,0] \\
(x,(y_1,\cdots, y_{n+1-k}) ) & \mapsto & (f(x,y'),y_{n+1-k})\\
\end{array} \right. \]
where $y'=(y_1,\cdots, y_{n-k}) \in B^{n-k}$.
Clearly, $F$ is a smooth embedding in $\Om$  such that $F_{/S^k \times
  B^{n-k} } = f$ where $B^{n-k}$ is seen as a subset of $B^{n+1-k}_-$
writing that $B^{n-k} = \{ (y_1,\cdots y_{n+1-k} ) \in B^{n+1-k}_- |
y_{n+1-k} = 0\} $. 

\noindent Set now    
  $$\tilde{\Om}_F \definedas (\Om \setminus F(S^k \times \overline{B^{n+1-k}_-} )) \cup (\overline{B^{k+1}} \times S^{n-k}_-)_{/\sim}$$
where $\sim$ means that we glue the boundaries. It is straightforward to
see that $\tilde{\Om}_F$ is diffeomorphic to $\Om^\#_f$.
In this way, attaching a handle is view as a ``half-surgery'' on
$\Om$. This was also the viewpoint adopted by Ole
Andersonn \cite{andersonn:08} in his thesis.


\subsection{Connected sum along a submanifold of manifolds with boundary} \label{connected_sum}
Assume first 
that $(M_1,h_1)$ and $(M_2,h_2)$ are Riemannian  manifolds without boundary of dimension $n$ and that
$W$ is a compact manifold of dimension~$k$. Let embeddings $W
\hookrightarrow M_1$ and $W \hookrightarrow M_2$ be given. We assume
further that the normal bundles of these embeddings are trivial. 
Removing tubular neighborhoods of the images of $W$ in $M_1$ and
$M_2$, and gluing together these manifolds along their common
boundary, we get a new compact manifold $M^\# \definedas M_1 \cup_W M_2$, called {\it the connected sum of
$M_1$ and $M_2$ along $W$}. Notice that $M^\#$ depends on the trivialisation of the normal bundles. 
Surgery as explained in last paragraph \ref{surgatt} is a special case of this construction: if $M_2=S^n$, 
$W=S^k$ and if $S^k\hookrightarrow S^n$ is the standard embedding,
then $M^\#$ is obtained from $M_1$ from a $k$-dimensional surgery along 
$S^k\hookrightarrow M_1$. For more informations on this construction, see \cite{ammann.dahl.humbert:08}.\\

\noindent In this paper, we need to adapt this construction to the case of manifolds with boundary. Let
$(\Om_1,g_1)$,  $(\Om_2,g_2)$ be $(n+1)$-dimensional Riemannian manifolds
with respective boundaries  $M_1$ and $M_2$. We denote by $h_i$ ($i=1,2$)
the trace of $g_i$ on $M_i$ i.e. $\partial g_i= h_i$.  Let $W$ be a compact
manifold of dimension $k$. If $W$ embedds in $\stackrel{\circ}{\Om}_1$ and
$\stackrel{\circ}{\Om}_2$, 
then we can proceed exactly as in the case of manifolds without boundary explained above and we obtain a new manifold $\Om^\# \definedas \Om_1 \cup_W \Om_2$ called again the connected sum of $\Om_1$ and $\Om_2$ along $W$. Obviously, $\partial \Om^\#= M_1 \amalg M_2$. 
In the case where $\Om_2=S^n$, 
$W=S^k$ and if $S^k\hookrightarrow S^n$ is the standard embedding,
then $\Om^\#$ is obtained from $\Om_1$ by  an interior $k$-dimensional surgery.

\noindent Now, assume that $W$ embedds into the boundaries $M_i$ of $\Om_i$. Let us make it precise now.   
We assume that some smooth embeddings $\bar{w}_i: W \times \mR^{n+1-k} \to T\Om_i$ $i=1,2$ are given. In what follows, we identify $\mR^{n-k}$ with $\mR^{n-k} \times \{0 \} \subset \mR^{n+1-k}$. 
We make the following additional assumptions of  $\bar{w}_i$ :

\begin{itemize}
\item First, we assume that $\bar{w}_i$ restricted to $W \times \mR^{n-k}$ embedds in $TM_i \subset T \Om_i$. 
\item Then, we want that $\bar{w}_i$ restricted to $W \times \{ 0 \}$ maps to the zero section of $TM_i$ (which we
identify with $M_i$) and thus gives an embedding $W \hookrightarrow M_i \subset \Om_i$. The
image of this embedding is denoted by $W_i'$.
\item Further we assume that
$\bar{w}_i$ restrict to linear isomorphisms $\al_p: \{ p \} \times \mR^{n+1-k}
\to N_{\bar{w}_i(p,0)} W_i'$ for all $p \in W$. Here $N W_i'$
denotes the normal bundle of $W_i'$ defined using $g_i$. In addition, we assume that 
$\al_p$ restricted to $\{p\} \times \mR^{n-k}$ is an isomorphism onto
$N_{\bar{w}_i(p,0)} W_i' \cap TM_i$. We can assume also that $\bar{w}_i(
\{p \} \times (0,\cdots,0,1))$ denotes the outer normal unit vector at $p$.
\end{itemize} 

\noindent  Now, we set $w_i \definedas \exp^{g_i} \circ \bar{w}_i$. This gives
embeddings $w_i: W \times B_-^{n+1-k}(\Rmax) \to \Om_i$ for some 
$\Rmax > 0$ and for $i=1,2$. We have $W_i' = w_i(W \times \{ 0 \})$. We obtain a new manifold with boundary $\Om^\#$ by gluing $\Om_1 \setminus (w_1 ( W \times B_-^{n+1-k}(\Rmax)))$ and $\Om_2 \setminus (w_2(W \times B_-^{n+1-k}(\Rmax)))$ 	along $w_i(W	\times S^{n-k})$. 
This manifold is again called {\it the connected sum of $\Om_1$ and $\Om_2$ along $W$}. Let $M^\# \definedas \partial \Om$. Then, $M^\#$ is the connected sum of $M_1$ and $M_2$ along $W$ as explained above. 

\noindent In the special case that $(\Om_2,g_2)$ is the half-sphere $\mS_+^{n+1}$ (and hence $M_2$ is the standard $n$-dimensional sphere) and that $W = \mS^k \subset \mS^n = \partial \mS^{n+1}_+$, then one can verify that the resulting manifold $\Om^\#$ is obtained from $\Om_1$  by attachment of a $(k+1)$-dimensional handle as explained in paragraph \ref{surgatt} and hence, $M^\#$ is obtained from $M_1$ by a surgery of dimension $k$. 

\noindent In what follows, we assume that the metrics $g_i$ have a product form $h_i+ds_i^2$ near the boundaries $M_i$. We define the disjoint union
$$ 
(\Om,g) \definedas (\Om_1 \amalg \Om_2, g_1 \amalg g_2),
$$
$$(M,h) \definedas  (M_1 \amalg M_2, h_1 \amalg h_2)$$
and 
$$
W' \definedas W_1' \amalg W_2'.
$$
Let $r_i$ be the function on $\Om_i$ giving the distance to $W_i'$ associated to the metric $g_i$. Since the metric $g_i$ has the product form $h_i+ds_i^2$ near $M_i$, we have 
\begin{eqnarray} \label{pythagore}
r_i^2 = s_i^2 + (\dist_{h_i}(\cdot,W_i'))^2.
\end{eqnarray}
 We also have $r_1 \circ w_1 (p,x) = r_2 \circ w_2(p,x) = |x|$ for $p \in W$,
$x \in B^{n+1-k}_-(\Rmax)$. Let $r$ be the function on $M$ defined by
$r(x) \definedas r_i(x)$ for $x \in M_i$, $i=1,2$. For $\ep >0$ we
set $U_i(\ep) \definedas \{ x \in M_i \, : \, r_i(x) < \ep \}$ and
$U(\ep) \definedas U_1(\ep) \cup U_2(\ep)$. For $0 < \ep < \th$ we
define
$$
\Om_{\ep}^\# 
\definedas
( \Om_1 \setminus U_1(\ep) ) \cup ( \Om_2 \setminus U_2(\ep) )/ {\sim},
$$
and 
$$
U^{\Om^\#}_\ep (\th)
\definedas
(U(\th) \setminus U(\ep)) / {\sim}
$$
where ${\sim}$ indicates that we identify $x \in \partial U_1(\ep)$
with $w_2 \circ w_1^{-1} (x) \in \partial U_2(\ep)$. Hence
$$
\Om_{\ep}^\# 
=
(\Om \setminus U(\th) ) \cup U^{\Om^\#}_\ep (\th).
$$

\noindent We say that $\Om_\ep^\#$ is obtained from $M_1$, $M_2$ (and $\bar{w}_1$, 
$\bar{w}_2$) {\it by a connected sum along $W$ with parameter $\ep$}. 

\noindent The diffeomorphism type of $\Om_\ep^\#$ is independent of $\ep$, hence unless when the parameter $\ep$ is needed, we
will usually write $\Om^\# = \Om_\ep^\#$.


\subsection{Surgery and Yamabe invariants}
\subsubsection{Statement of the results}
First, we will need the following theorem due to Gromov-Lawson and
Schoen-Yau (\cite{gromov.lawson:80} and
\cite{schoen.yau:79}) and which can be also deduced from the
\cite{ammann.dahl.humbert:08}:  



\begin{theorem} \label{main_surgery_interior}
Let $(\Om,G)$ be a compact Riemannian manifold of dimension greater than $3$ with boundary $M$ and  let
$\Om^\#$ be obtained from $M$ by an interior surgery of codimension at
least $3$. Assume that $\mu(\Om, [G]; M, \partial [G])>0$. Then, there exists on $\Om^\#$ a
metric $G^\#$ equal to $G$ in a neighborhood of $M = \partial \Om =
\partial \Om^\#$ such that $\mu(\Om^\#, [G^\#]; M, \partial [G])>0$.     
\end{theorem}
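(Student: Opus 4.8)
The plan is to reduce the statement to the surgery formula for manifolds \emph{without} boundary by passing to the double. Concretely, let $X \definedas \Om \cup_M \Om$ be the double of $\Om$, equipped with the doubled metric $\bar G = G \cup G$; after an initial conformal change within $[G]$ we may assume $M$ is minimal and $G$ is a product metric $h + ds^2$ near $M$, so $\bar G$ is smooth on $X$. An interior surgery of dimension $k$ on $\Om$, performed in the interior and hence away from $M$, doubles to a surgery of dimension $k$ on $X$ performed simultaneously on both copies of the handle; call the result $X^\#$. Since the codimension in $\Om$ is $n+1-k \geq 3$, the codimension of this surgery in $X$ is also $n+1-k \geq 3$, and $k \leq (n+1)-3$, so Theorem~\ref{adh} applies to $(X,\bar G)$.

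First I would invoke Proposition~\ref{basic2}, which gives $2^{2/(n+1)}\mu(\Om,[G];M,\partial[G]) = \inf J^{n+1}_{X,\bar g}(u)$ over functions $u$ on $X$ with vanishing normal derivative along $M$; in particular $\mu(\Om,[G];M,\partial[G])>0$ is equivalent to $\mu(X,[\bar G])>0$, because any minimizer for $\mu(X,[\bar G])$ can be symmetrized across $M$ (replacing $u(x)$ by a suitable symmetric competitor built from $u$ and its reflection) without increasing $J^{n+1}_{X,\bar g}$, so the symmetric infimum agrees with the full infimum. Thus the hypothesis $\mu(\Om,[G];M,\partial[G])>0$ yields $\mu(X,[\bar G])>0$.

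Next I would apply Theorem~\ref{adh} to $(X,\bar G)$: there is a family of metrics $(\bar G_\th)$ on $X^\#$ with $\lim_{\th\to0}\mu(X^\#,[\bar G_\th]) \geq \min(\mu(X,[\bar G]),\beta_{n+1,k}) > 0$, so for small $\th$ we get $\mu(X^\#,[\bar G_\th])>0$. The key point is that the construction in \cite{ammann.dahl.humbert:08} is local near the surgery locus and symmetric: because the surgery is performed identically on the two copies of the handle and the original metric $\bar G$ is $\mZ/2$-symmetric (with fixed-point set $M$, which is untouched), one can carry out the surgery construction equivariantly so that $\bar G_\th$ is again invariant under the reflection swapping the two copies, and equals $G$ near $M$. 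Restricting $\bar G_\th$ to one copy of $\Om^\#$ gives a metric $G^\#$ on $\Om^\#$ equal to $G$ near $M=\partial\Om^\#$, with minimal boundary, and again by Proposition~\ref{basic2} (now for $X^\#$) together with the symmetrization argument, $\mu(\Om^\#,[G^\#];M,\partial[G]) = 2^{-2/(n+1)}\mu(X^\#,[\bar G_\th]) > 0$ for $\th$ small.

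The main obstacle is verifying the \emph{equivariance} of the surgery construction of Theorem~\ref{adh}: one must check that the $\mZ/2$-symmetry of $(X,\bar G)$ can be respected throughout the construction of $\bar G_\th$ (the model metrics on $S^k \times B^{n+1-k}$ glued in near the surgery sphere, and the interpolation), so that the restriction to $\Om^\#$ is smooth across the gluing region and the hypothesis "equal to $G$ in a neighborhood of $M$" is genuinely achieved rather than merely approximately. Since the surgery locus is disjoint from the fixed-point set $M$, the symmetry acts freely on a neighborhood of it and simply permutes the two handle copies, so the equivariant version is a formal consequence of the non-equivariant one applied on the quotient; this is the step I would write out carefully, and it is also where the product-metric normalization near $M$ is used.
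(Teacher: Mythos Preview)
The paper does not actually prove this statement; it attributes it to Gromov--Lawson and Schoen--Yau and remarks that it can also be deduced from \cite{ammann.dahl.humbert:08}. Your doubling strategy is a reasonable way to flesh out the latter route, and most of it is sound. The equivariance step is unproblematic: the surgery locus lies in the interior, so the ADH construction is local there and can be performed identically on both copies, yielding a $\mZ/2$-symmetric metric on $X^\#$ that restricts to $\Om^\#$. The sign equivalence $\mu(\Om,[G];M,[h])>0 \Leftrightarrow \mu(X,[\bar G])>0$ is also correct, though your ``symmetrize the minimizer'' argument is imprecise (symmetrizing need not decrease $J$). The clean argument is that the first eigenfunction of $L_{\bar G}$ on $X$ is positive, hence reflection-invariant by simplicity, hence satisfies the Neumann condition on $M$; thus the first eigenvalues, and with them the signs of $\mu$, coincide.

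There is, however, a genuine slip. You write that ``after an initial conformal change within $[G]$ we may assume $M$ is minimal and $G$ is a product metric $h+ds^2$ near $M$.'' Making $M$ minimal is indeed a conformal change, but making $G$ a product near $M$ is \emph{not}: a generic conformal class contains no metric that is a product on a collar. Without the product form the doubled metric $\bar G$ is not smooth across $M$, so you cannot apply Theorem~\ref{adh} to $(X,\bar G)$. The paper itself, in the proof of Theorem~\ref{main_surgery}, uses a \emph{non-conformal} modification (via \cite{carr:88} or \cite{akutagawa.botvinnik:02a}) to pass to a nearby $\tilde G$ with product form and $\mu>0$; but then your argument only yields $G^\#=\tilde G$ near $M$, not $G^\#=G$, and the missing step of undoing that collar modification on $\Om^\#$ while keeping $\mu>0$ is not addressed. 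The simplest repair is the Gromov--Lawson route the paper cites: from $\mu>0$ take $G'=u^{4/(n-1)}G\in[G]$ with positive scalar curvature and minimal boundary; the Gromov--Lawson deformation is local near the interior surgery sphere and gives a PSC metric $G^{\#\prime}$ on $\Om^\#$ equal to $G'$ near $M$, hence $\mu(\Om^\#,[G^{\#\prime}];M,[h])>0$; finally extend $u$ positively over the handle and set $G^\#=u^{-4/(n-1)}G^{\#\prime}$, which equals $G$ near $M$ and lies in the same conformal class. Alternatively, note that in the paper's applications in Section~\ref{concordant} the input metrics already have product form near the boundary, so your doubling argument goes through unchanged there.
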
 

\noindent Let us deal now with the case where $W$ embedds in the
boundary. We prove the following result which in view of Paragraph
\ref{connected_sum} is stronger than Theorem \ref{main}.  
\begin{theorem} \label{main_surgery}
Let $n\in \mN$, $n \geq 2$ and $(\Om_1,g_1)$, $(\Om_2,g_2)$ be compact $(n+1)$-dimensional  
Riemannian manifolds with respective boundaries $M_1$ and $M_2$ and set
$h_i =\partial g_i$. 
Let also $W$ be a a compact manifold without boundary of dimension $k \in \{0,\cdots,n-2 \}$ that embedds in $M_i$ (see paragraph \ref{connected_sum}).  
Let $\Om^\#$ be the connected sum of $\Om_1$ and $\Om_2$ along $W$ and set $M^\# \definedas \partial \Om^\#$.
Then, there exists some constants $\alpha_{n,k} >0$ depending only on $n$
and $k$ with $\al_{n,0} = + \infty$ and a sequence of metrics
$(g_\th)_{\th>0}$  on $\Om^\#$ equal to $g= g_1 \amalg g_2$ 
except in a small neigbourhood of $W$ (if $U$ is a small neighborhood of $W$, we see $W\setminus U$ as embedded in $\Om^\#$) such   
that, if we note $h_\th=\partial g_\th$, 
\begin{eqnarray} \label{result_on_Y}
\lim_{\th \to 0} \mu(\Om^\#, [g_\th]; M^\#,[h_\th]) \geq \min(\mu(\Om_1,g_1;M_1,[h_1]),\mu(\Om_2,[g_2];M_2,[h_2]), \al_{n,k}).
\end{eqnarray}
If in addition, $n \geq 3$ and $k \leq n-3$, the metrics $h_\th \definedas \partial g_\th$
coincides with the metrics given by Theorem 2.3 in \cite{ammann.dahl.humbert:08}. In
other words, there exists a constant $\beta_{n,k}>0$ (the same as in
Theorem 
\ref{adh}) with $\beta_{n,0} = +
\infty$  such that  
\begin{eqnarray} \label{result_on_mu}
\lim_{\th \to 0} \mu(M^\#,[\partial g_\th]) \geq \min(\mu(M_1, [h_1]), \mu(M_2, [h_2]), \beta_{n_k}.
\end{eqnarray} 
Moreover, for $k=0$, we have 
\begin{eqnarray*} 
\alpha_{n,0} = \beta_{n,0} = + \infty.
\end{eqnarray*}
\end{theorem}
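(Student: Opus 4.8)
\noindent The plan is to reduce Theorem~\ref{main_surgery} to the surgery formula of Ammann--Dahl--Humbert (Theorem~\ref{adh}), applied in a $\mZ/2$-equivariant way to the \emph{doubles} of $\Om_1$ and $\Om_2$. The link between the relative problem on $\Om_i$ and the ordinary Yamabe problem on the double $X_i\definedas\Om_i\cup_{M_i}\Om_i$ is furnished by Propositions~\ref{basic} and~\ref{basic2}: since $g_i$ is a product near $M_i$ it doubles to a smooth metric $\bar g_i=g_i\cup g_i$ on $X_i$, invariant under the reflection involution $\io_i$ whose fixed locus is $M_i$. Because $W$ is embedded in $M_i$ it lies in this fixed locus; its normal bundle in $X_i$ splits $\io_i$-equivariantly as the normal bundle of $W$ in $M_i$, on which $\io_i$ acts trivially, plus a trivial line on which $\io_i$ acts by $-1$, and since $k\le n-2$ this bundle has rank $n+1-k\ge3$. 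Writing $\mu_i\definedas\mu(\Om_i,[g_i];M_i,[h_i])$, Proposition~\ref{basic2} gives $2^{\frac{2}{n+1}}\mu_i=\inf J^{n+1}_{X_i,\bar g_i}(u)$, the infimum over non-zero $\io_i$-invariant (equivalently Neumann) $u$, and the same identity on the connected sum.

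\noindent First I would perform the connected-sum construction equivariantly. The construction of~\cite{ammann.dahl.humbert:08} (recalled in Paragraph~\ref{connected_sum}) replaces $\io_i$-invariant tubular neighbourhoods of $W$ in $X_i$ by a standard neck carrying an explicit warped metric on a piece of $\mR^{k+1}\times\mS^{n-k}$; this model neck, and all the auxiliary model metrics of~\cite{ammann.dahl.humbert:08}, are invariant under the reflection of $\mS^{n-k}$ through an equatorial $\mS^{n-k-1}$, and the construction can moreover be arranged so that its restriction to that equator is precisely the model for $k$-surgery on the $n$-manifold $M_i$ used in~\cite{ammann.dahl.humbert:08}. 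Carrying it out $\io$-equivariantly on $X_1\amalg X_2$ produces an $\io$-invariant metric $\bar g_\th$ on the double $X^\#\definedas X_1\cup_W X_2$ of $\Om^\#$, hence a metric $g_\th$ on $\Om^\#$ that is a product near $M^\#$, equals $g=g_1\amalg g_2$ outside a neighbourhood of $W$, and satisfies $\bar g_\th=g_\th\cup g_\th$, with $h_\th\definedas\partial g_\th$ equal to the metric on $M^\#$ obtained in~\cite{ammann.dahl.humbert:08} from $h=h_1\amalg h_2$ by the $k$-surgery along $W$.

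\noindent The core step is the equivariant analogue of the estimate of~\cite{ammann.dahl.humbert:08}, namely
\[
\liminf_{\th\to0}\ \inf_{u\ \io\text{-invariant},\,u\neq0}\ J^{n+1}_{X^\#,\bar g_\th}(u)\ \ge\ 2^{\frac{2}{n+1}}\min(\mu_1,\mu_2,\al_{n,k}),
\]
where $\al_{n,k}>0$ is to be specified. I would run the limit argument of~\cite{ammann.dahl.humbert:08} while tracking $\io$: assuming failure, choose $\th\to0$ and $\io$-invariant $u_\th$ with $\int_{X^\#}|u_\th|^p\,dv=1$ ($p=\frac{2(n+1)}{n-1}$) and $J^{n+1}_{X^\#,\bar g_\th}(u_\th)\to c$ strictly below the right-hand side, pass to a weak $H^1$-limit $u_\infty$ (again $\io$-invariant), and apply the concentration--compactness dichotomy. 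The non-concentrating part produces an $\io$-invariant function on the fixed manifold $X=X_1\amalg X_2$ (recovered from $X^\#$ outside the codimension-$\ge3$ set $W$, where $\bar g_\th\to\bar g$), whose Yamabe quotient is $\ge2^{\frac{2}{n+1}}\min(\mu_1,\mu_2)$ by Proposition~\ref{basic2}. Each concentrating ``bubble'' forms about an $\io$-invariant set of points and, after rescaling, converges to a non-zero Yamabe solution on an $\io$-equivariant model: two mirror copies of $\mR^{n+1}$ (concentration away from $M^\#$ and from the neck), $\mR^{n+1}$ with the reflection fixing $\mR^n$ (concentration on $M^\#$, away from the neck), or the neck model $\mR^{k+1}\times\mS^{n-k}$ with the reflection above (concentration inside the neck). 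Setting
\[
\al_{n,k}\definedas\min\bigl(2^{-\tfrac{2}{n+1}}\mu(\mS^{n+1}),\ \text{relative Yamabe constant of the half-neck }\mR^{k+1}\times\mS^{n-k}_+\bigr)
\]
--- with the convention $\al_{n,0}=+\infty$, the $k=0$ model carrying no obstruction exactly as $\be_{n,0}=+\infty$ in Theorem~\ref{adh} --- each alternative forces $c\ge2^{\frac{2}{n+1}}\al_{n,k}$ (using $2^{\frac{2}{n+1}}\mu(\mS^{n+1}_+,\mS^n)=2^{\frac{4}{n+1}}\mu(\mS^{n+1})$ for the boundary model, the analogue of Proposition~\ref{basic2} for the half-neck, and $a^{\frac{n-1}{n+1}}+b^{\frac{n-1}{n+1}}\ge(a+b)^{\frac{n-1}{n+1}}$ when masses split), a contradiction. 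Dividing the displayed inequality by $2^{\frac{2}{n+1}}$ and applying Proposition~\ref{basic2} on $\Om^\#$ gives~\eref{result_on_Y}, and $\al_{n,k}>0$ because the model constants are positive --- the quantitative core of~\cite{ammann.dahl.humbert:08}.

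\noindent Finally, for the boundary statement: if $n\ge3$ and $k\le n-3$ the surgery on $M$ has codimension $\ge3$ and, by the construction above, $h_\th$ is the metric of Theorem~\ref{adh}, so applying Theorem~\ref{adh} to $(M_1\amalg M_2,h_1\amalg h_2)$ and the $k$-surgery along $W$ yields~\eref{result_on_mu} with the same $\be_{n,k}$ (and $\be_{n,0}=+\infty$); the case $k=0$ forces $\al_{n,0}=\be_{n,0}=+\infty$. The main obstacle is the third step: verifying that the $\io$-symmetry is genuinely compatible with the rescaling and concentration analysis of~\cite{ammann.dahl.humbert:08} --- that concentration points and bubbles occur in $\io$-invariant families and that the reflection across $M^\#$ passes to the blow-up limit --- and carrying out the model computation that proves positivity of the equivariant Yamabe constants of $\mR^{k+1}\times\mS^{n-k}$; it is this computation that actually defines $\al_{n,k}$ and fixes its relation to $\be_{n,k}$. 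The doubling dictionary, the equivariant surgery construction, and the easy case $\min(\mu_1,\mu_2)\le0$ are routine.
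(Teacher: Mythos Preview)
Your overall strategy --- reduce to the closed case via the double and invoke the analysis of \cite{ammann.dahl.humbert:08} --- is exactly the paper's. But the paper's execution sidesteps precisely the ``main obstacle'' you flag, the equivariant concentration--compactness, and thereby avoids introducing a new half-neck model constant.

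The simplification is this. Rather than minimising $J^{n+1}_{X^\#,\bar g_\th}$ over $\io$-invariant test functions and tracking symmetry through bubbles, the paper first assumes $\mu(\Om^\#,[g_\th];M^\#,[h_\th])<\mu(\mS^{n+1}_+,\mS^n)$ (else there is nothing to prove) and takes the \emph{Escobar minimiser} $u_\th$ on $\Om^\#$, which is a genuine solution of $L_{g_\th}u_\th=\la_\th u_\th^{(n+3)/(n-1)}$ with Neumann data. Its double $\bar u_\th$ is then, by Proposition~\ref{basic}, a smooth solution of $L_{\bar g_\th}\bar u_\th=2^{2/(n+1)}\la_\th\,\bar u_\th^{(n+3)/(n-1)}$ on the closed manifold $X^\#$. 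Now one runs the case analysis of \cite{ammann.dahl.humbert:08} for this sequence of solutions. In every concentration case (all subcases except II.1.2) that analysis yields $2^{2/(n+1)}\la_\infty\ge\be_{n+1,k}$ \emph{without any reference to $\io$}: the bubble bound is the ordinary non-equivariant model constant, and it makes no difference whether $\bar u_\th$ happens to be symmetric. Hence one may simply set $\al_{n,k}\definedas 2^{-2/(n+1)}\be_{n+1,k}$, and no separate half-neck computation is needed.

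Only in the non-concentration Subcase~II.1.2 does one need to produce a good test function on the original manifold, and here a single observation replaces your equivariant machinery: the cut-off $\chi$ used in \cite{ammann.dahl.humbert:08} depends only on the radial variable $t$ (equivalently on $r$), so $\chi\bar u_\th$ is automatically $\io$-invariant and has vanishing normal derivative on $M\subset X$. Proposition~\ref{basic2} then gives $J^{n+1}_{X,\bar g}(\chi\bar u_\th)\ge 2^{2/(n+1)}\mu(\Om,[g];M,[h])$, which is exactly the relative bound one wants. Thus the symmetry is used only once, at the very end, and only through the trivial fact that a radial cut-off is reflection-invariant; your proposed tracking of $\io$ through rescalings, mirror bubbles, and equivariant model spaces is unnecessary.
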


\subsubsection{Proof of Theorem \ref{main_surgery}} 
We use the notations of Paragraph \ref{connected_sum}. We recall the
notations  $\Om=\Om_1 \amalg \Om_2$, $M = M_1 \amalg M_2$, $W'= W_1' \amalg
W_2'$ and $g = g_1 \amalg g_2$. We also use the notation $h \definedas
\partial g$. If $(g_m)$ is a sequence of metric which converges toward a
metric $g_\infty$ in $C^0(\Om)$ and if $\Scal_{g_m}$ converges also in
$C^0$ to $\Scal_{g_\infty}$ then $\mu(\Om,[g_m];\partial \Om,[\partial
g_m]))$ tends to $\mu(\Om,[g_\infty]; \partial \Om, [\partial g_\infty])$
(see Proposition 4.31 of B\'erard-Bergery in \cite{besse:87} and Lemma 4.1 in \cite{akutagawa.botvinnik:02a}).
Theorem 4.6 in \cite{akutagawa.botvinnik:02a} or the results of Carr
\cite{carr:88} then imply that we can choose a metric $\tilde{g}$ on $\Om$ such that:
\begin{itemize}
\item $\partial \tilde{g}= \partial{g}= h$, 
\item $\tilde{g} = h + ds^2$ in a neighborhood of $M$ (where $s=s_i$ on $M_i$ with $s_i$ defined as in the end of Paragraph \ref{connected_sum}),  
\item $\mu(\Om,[\tilde{g}]; M, [h])$ is as close as desired to $\mu(\Om,[g];M,[h])=$ \\
$\min(\mu(\Om_1,[g_1]; M_1,[h_1]),\mu(\Om_2,[g_2]; M_2,[h_2]))$. 
\end{itemize}
Then, without loss of generality, we can replace $g$ by $\tilde{g}$ so that the metric has now the above properties.  
The desired sequence $(g_\th)$ of metrics will be constructed as in \cite{ammann.dahl.humbert:08}. We now explain how this construction can be adapted here. In the
following, $C$ denotes a constant that might change its value between
lines. We denote by $h_i'$ the restriction of $g_i$ to $TW'= T(W_1' \amalg W_2')$ over $W' \subset \Om$. 
As already explained, the normal exponential map of $W'$
defines a diffeomorphism 
$$
w_i: W \times B_-^{n+1-k}(\Rmax)
\to 
U_i(\Rmax),\quad i=1,2,
$$
which decomposes $U(\Rmax) = U_1(\Rmax) \amalg U_2(\Rmax)$ as a
product $W' \times B_-^{n+1-k}(\Rmax)$. In general the Riemannian metric
$g$ does not have a corresponding product structure, and we introduce
an error term $T$ measuring the difference from the product metric. If
$r$ denotes the distance function to $W'$, then the metric $g$ can be
written on 
$U(\Rmax)\setminus W'\cong W'\times (0,\Rmax)\times S_-^{n-k}$ as
\begin{equation} \label{metric=product} 
g =  h' + \xi^{n+1-k} + T = h' + dr^2 + r^2 \sigma^{n-k} + T.
\end{equation} 
where $h'$ is the restriction of $g$ on $TW'$, $T$ is a symmetric $(2,0)$-tensor vanishing on $W'$ 
(in the sense of sections of $(T^*\Om \otimes T^*\Om)|_{W'}$). Note that
since $g$ is a product near the boundary, 

\begin{eqnarray} \label{Tnorm}
T\left( v, \cdot\right) = 0
\end{eqnarray}
for all vector $v$ normal to $M$.  
We also define the product metric
\begin{equation} \label{def.g'}
g' \definedas h' + \xi^{n+1-k} = h' + dr^2 + r^2 \si^{n-k},
\end{equation}
on $U(\Rmax) \setminus W'$. Thus $g = g' + T$. 
 We define $T_i \definedas T|_{\Om_i}$ for
$i=1,2$. 

\noindent For a fixed $R_0\in (0,\Rmax)$ we choose a smooth positive function  
$F: \Om \setminus W' \to \mR$ such that 
$$
F(x) = 
\begin{cases}
1, 
&\text{if $x \in \Om_i \setminus U_i(\Rmax)$;} \\ 
r_i(x)^{-1}, 
&\text{if $x \in U_i(R_0)\setminus W'$.}
\end{cases}
$$
Next we choose small numbers $\theta, \detwo \in (0,R_0)$ with 
$\th> \detwo > 0$. Here ``small'' means that we first choose a
sequence $\th=\th_j$ of small positive numbers tending to zero, such
that all following arguments hold for all $\th$.  Then we choose for
any given $\th$ a number $\detwo = \detwo(\th) \in (0,\th)$ such that
all arguments which need $\detwo$ to be small will hold,

%
For any $\th>0$ and sufficiently small $\detwo$ there is 
$A_\th\in [\th^{-1}, (\detwo)^{-1})$ and a smooth function 
$f: U(\Rmax) \to \mR$ depending only on the coordinate $r= \dist_g(\cdot,W')$ such
that 

$$
f(x)  =  
\begin{cases}
         -  \ln r(x),  &\text{if $x \in U(\Rmax) \setminus U(\th)$;} \\
\phantom{-} \ln A_\th, &\text{if $x \in U(\detwo)$,} 
\end{cases}
$$
and such that
\begin{equation} \label{asumpf}
\left| r\frac{df}{dr} \right|
= 
\left| \frac{df }{d(\ln r)} \right|
\leq 1, 
\quad 
\text{and} 
\quad
\left\|r\frac{d}{dr}\left(r\frac{df}{dr}\right)\right\|_{L^\infty}
= 
\left\|\frac{d^2f}{d^2(\ln r)}\right\|_{L^\infty}
\to 0
\end{equation}
as $\th\to 0$. 
We set $\ep = e^{-A_\th} \detwo$. We can and will assume that $\ep<1$.
Let $\Om^\#$ be obtained from~$\Om$ by a connected sum along $W$
with parameter $\ep$, as described in Paragraph~\ref{connected_sum}. 
In particular,
$U^{\Om^\#}_\ep(s) = U(s)\setminus U(\ep)/{\sim}$ for all $s\geq \ep$.
On the set $U^{\Om^\#}_\ep(\Rmax) = U(\Rmax) \setminus U(\ep)/{\sim}$ we define  
the variable $t$ by 
$$
t \definedas
\begin{cases}
         -  \ln r_1 + \ln \ep, & 
\text{on $U_1(\Rmax) \setminus U_1(\ep)$;} \\
\phantom{-} \ln r_2 - \ln \ep, & 
\text{on $U_2(\Rmax) \setminus U_2(\ep)$.}
\end{cases}
$$
We can assume that 
$t:U^{\Om^\#}_\ep(\Rmax) \to \mR$  is smooth. 
We choose a
cut-off function $\chi:\mR\to [0,1]$ such that $\chi=0$ on
$(-\infty,-1]$, $|d\chi| \leq 1$, and $\chi=1$ on $[1,\infty)$. With
these choices, we define 
$$
g_{\th}
\definedas  
\begin{cases}
F^2 g_i, 
&\text{on $\Om_i \setminus U_i(\th)$;} \\
e^{2f(t)}(h_i'+T_i) + dt^2 + \sigma^{n-k},
&\text{on $U_i(\th)\setminus U_i(\detwo)$;} \\
\begin{aligned}
&A_{\th}^2 \chi( t / A_{\th} )(h_2'+T_2)
+ A_{\th}^2 (1-\chi( t / A_{\th} ))(h_1'+T_1)\\
&\quad + dt^2 + \sigma^{n-k}, 
\end{aligned}
&\text{on $U^{\Om^\#}_\ep(\detwo)$.} 
\end{cases}
$$
It remains to proves that the sequence $(g_\th)$ satisfies the desired
conclusions. Set $h_\th \definedas \partial g_\th$. First of all, we prove
that $M^\#$ is minimal for the metrics $g_\th$. Let $p \in M^\#$. Assume
first that $p \in \Om_i \setminus U_i(\th)$. Note that the function $F$
depends only on the coordinate $r$. We denote by $\nu$ the outer normal
unit vector at $p$. 
Formula \eref{pythagore} then implies that $\partial_\nu r \equiv 0$ on $M
\setminus W'$ and hence $\partial_\nu F(p)=0$. This implies that the mean
curvature vanishes at $p$. Assume now that $p \in U_i(\th)\setminus
U_i(\detwo) \cup U^{\Om^\#}_\ep(\detwo)= U^{\Om^\#}_\ep(\th)$.
Observe that by Relation \eref{Tnorm}, the metric $g_\th$ has the form 
$$g_\th = H_1+ \al(r)H_2 + dt^2 + \sigma^{n-k}$$ 
where $H_i$ are 2-forms satisfying $H_i(\nu,\cdot) \equiv 0$ and  where
$\alpha(r)$ is a function depending only on $r$ and $\th$. 
Set $r_\th \definedas \dist_{h_\th}(W',\cdot)$. Then, $dt^2 =
\frac{1}{r^2} dr^2  = \frac{1}{r^2}(r_\th^2 +ds^2)$. Since $\partial_\nu r
\equiv 0$, we easily get that the mean curvature vanishes at $p$ and hence
$M^{\#}$ is minimal. \\

\noindent Assume for a while that $k \leq n-3$. Observe that since $g$ is a
product metric near $M$, the function $r=\dist_g(\cdot,W')$ coincides with
$\dist_h(\cdot,W')$ on the boundary. Consequently, the metric $h_\th$ on
$M^\#$ is exactly the same than the one constructed in the proof of Theorem
2.3 of \cite{ammann.dahl.humbert:08} which then shows that Relation \eref{result_on_mu} holds.

\noindent Let us come back to the general case $k \in \{ 0,\cdots,n-2\}$ and let us show Relation \eref{result_on_Y}. 
Let us denote by $X_i \definedas \Om_i \cup_{M_i} \Om_i$ ($i =1,2$)
(resp. $X^\# \definedas \Om^\# \cup_{M^\#} \Om^\#$) the double of $\Om_i$
(resp. $\Om^\#$). Notice that $X^\#$ is the connected sum of  $X_1$
and $X_2$ along $W$. We define on $X_i$ (resp. $X^\#$)  the metric
$\bar{g_i}= g_i \cup g_i$ (resp. $\bar{g}_\th=g_\th \cup g_\th$) as in
Paragraph~\ref{double}. Set also $X \definedas X_1 \amalg X_2$ and $\bar{g} \definedas
\bar{g}_1 \amalg \bar{g}_2$. The manifold $X$ is then the double of $\Om$. 
 Clearly, we can assume that
$\mu(\Om^\#,[g_\th]; M^\#,[h_\th]) < \mu(\mS^{n+1}_+,\mS^{n})$ otherwise the
proof is done.
By Theorem \ref{escobar}, there exists a function $u_\th \in C^\infty(\Om^\#)$, $u_\th >0$  normalized by 
$$\int_{\Om^\#} u_\th^\frac{2(n+1)}{n-1} dv_{g_\th}= 1 $$
which satisfies 
\[\left\{ \begin{array}{ccc} 
L_{g_\th} u_\th =  \la_\th u_\th^{\frac{n+3}{n-1}}  & \hbox{ on } &
\stackrel{\circ}{(\Om^\# )}\\
\partial_\nu u_\th = 0 & \hbox{ on } & M^\#,
\end{array} \right. \]
where $\la_\th=  \mu(\Om^\#, [g_\th];M^\#,[h_\th])$. 
By possibly taking a subsequence, we can assume that $\la_\infty \definedas \lim_{\th \to 0} \la_\th \in [-\infty, \mu(\mS^{n+1}_+,\mS^{n})]$ exists. 

\noindent Define 
$$\bar{u}_\th \definedas \frac{u_\th \cup u_\th}{2^\frac{n-1}{2(n+1)}} =   \frac{u_\th \cup u_\th}{\|u_\th \cup u_\th\|_{L^\frac{2(n+1)}{n-1}
(X^\#)}} $$  
on $X^\#$. 
Then, 
$$\int_{X^\#} \bar{u}_\th^\frac{2(n+1)}{n-1} dv_{\bar{g}_\th}= 1.$$
Proposition \ref{basic} implies that $\bar{u}_\th$ is smooth on $X^\#$ and satisfies 
$$L_{\bar{g}_\th} \bar{u}_\th = 2^{-\frac{n-1}{2(n+1)}} \la_\th (u_\th \cup
u_\th)^{\frac{n+3}{n-1}} = 2^\frac2{n+1} \la_\th \bar{u}_\th^\frac{n+3}{n-1}.$$
The idea now is to see how the proof of Theorem 2.3 in 
\cite{ammann.dahl.humbert:08} can be adapted to this situation. The first observation
is that the metric $(X^{\#},\bar{g}_\th)$ is construted from $(X,\bar{g})$
exactly in the same way than $(N,g_\th)$ is constructed from $(M,g)$ in
\cite{ammann.dahl.humbert:08}. We deduce immediatly that 
$$2^{\frac{2}{n+1}} \la_\infty \geq \min( \mu(X,[\bar{g}]), \beta_{n+1,k})$$
where $\be_{n,k}$ is as in Theorem \ref{adh}. The problem here is to get a
lower bound of $\mu(X,[\bar{g}])$ in terms of $\mu(\Om,[g];
M,[h])$ which seems difficult  without additional assumptions. So we have
to go through the proof in \cite{ammann.dahl.humbert:08} a little more
deeply. Observe that it is divided in many cases. The only case which is an
issue is Subcase II.1.2. Indeed, in other cases, we obtain that
$2^{\frac{2}{n+1}} \la_\infty \geq  \beta_{n+1,k}$ and we just set 
$\alpha_{n,k} \definedas 2^{- \frac{2}{n+1}}\beta_{n+1,k}$ to get Theorem
  \ref{main_surgery}. So assume now that assumptions of Subcase II.1.2
  occur. 
More precisely, we assume, using the notations of Paragraph
\eref{connected_sum}  that: 

$$\lim_{b\to 0} \limsup_{\th\to 0} \sup_{U^{X^\#}_\ep (b)}
  \bar{u} _\th= 0$$
where 
$$U^{X^\#}_\ep(b)\definedas U^{\Om^\#}_\ep(b) \cup_{\partial U^{\Om^\#}_\ep(b) \cap \partial \Om^\#} U^{\Om^\#}_\ep(b).$$
\noindent We then mimick the proof of \cite{ammann.dahl.humbert:08}. Let $d_0 >0$. We can choose a $b >0$ such that 

$$\int_{X^\# \setminus U^{X^\#}_\ep(2b)} \bar{u}_\th^{\frac{2(n+1)}{n-1}} dv_{\bar{g}_\th} \geq 1-d_0$$ 
and 
$$\int_{U^{X^\#}_\ep(2b) \setminus U^{X^\#}_\ep(b)} \bar{u}_\th^2 dv_{\bar{g}_\th} \leq d_0.$$ 
Then we choose a cut-off function $\eta\in C^{\infty}(X^\sharp)$, $0 \leq \eta \leq 1$  depending only on $t$ (clearly the function $t$ can be naturally extended smoothly to $X^\#$) equal to $0$ on $U^{X^\#}_\ep(b)$, equal to $1$ on $X^\# \setminus U^{X^\#}_\ep(2b)$ and which satisfies 
$|d \xi|_{\bar{g}_\th} \leq 2 \ln(2)$. 
Then, as in \cite{ammann.dahl.humbert:08}, we obtain that  
$$J^{n+1}_{X,\bar{g}} (\chi \bar{u}_\th) \leq \frac{2^\frac{2}{n+1} \la_\th + |2^\frac{2}{n+1} \la_\th|d_0 + 4 (\ln(2))^2a d_0}{(1-d_0)^\frac{n-1}{n+1}}$$ 
where $a= \frac{4n}{n-1}$. 
Since $\chi$ depends only on $t$ and hence of $r$,
observe that the function $\chi \bar{u}_\th$ 
has normal derivative vanishing on the minimal hypersurface $M^\# \subset X^\#$. 
By Proposition \ref{basic2}, we obtain that 
$$2^\frac{2}{n+1} \mu(\Om,[g];M,[h]) \leq   J^{n+1}_{X,\bar{g}} (\chi \bar{u}_\th)$$ and hence, letting $d_0$ tends to zero,
$$\mu(\Om,[g];M,[h]) \leq \la_\infty.$$
This proves Theorem \ref{main_surgery}.
\subsection{Surgery on cylinders} \label{surgery_cyl}
\subsubsection{Statements of the results}

\noindent Let $M,N$ be a compact $n$-dimensional manifold without boundary. Assume that $N$ is obtained from $M$ by a surgery of dimension $k \in \{0,\cdots,n-1 \}$ associated to an embedding $f:S^k \times B^{n-k} \hookrightarrow M$.  Let $\Om = M \times [0,1]$. Attaching on $\Om$ two $(k+1)$-dimensional handles along 
$f(	S^k \times B^{n-k}) \times\{0,1\}$, we get a new manifold $\Om'$ whose boundary is $N \amalg N$ (see Paragraph \ref{surgery}).
We prove: 

\begin{lemma} \label{surgery_cyl_1}
The manifold $N \times [0,1]$ is obtained from $\Om'$ by an interior $(k+1)$-dimensional surgery.
\end{lemma}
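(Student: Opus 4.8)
The plan is to realise $N\times[0,1]$ as an interior surgery on $\Om'$ along an explicit $(k+1)$-sphere built from the cores of the two attached handles. Recall from Paragraph \ref{surgatt} that $\Om'=(M\times[0,1])\cup H_0\cup H_1$, where $H_i\definedas\overline{B^{k+1}}\times\overline{B^{n-k}}$ is glued to $M\times\{i\}$ along $f(S^k\times B^{n-k})\times\{i\}$ via $f$, for $i=0,1$, and $\partial\Om'=N\amalg N$. Let $\Sigma_0\definedas f(S^k\times\{0\})\subset M$ be the attaching sphere; the embedding $f$ equips its normal bundle in $M$ with a trivialisation. Put $D_i\definedas\overline{B^{k+1}}\times\{0\}\subset H_i$ (the core of $H_i$) and $Y\definedas\Sigma_0\times[0,1]\subset M\times[0,1]$. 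Since $\partial D_i=S^k\times\{0\}$ is glued to $\Sigma_0\times\{i\}$, after smoothing corners $\Sigma\definedas D_0\cup Y\cup D_1$ is a smoothly embedded copy of $S^{k+1}$ (a cylinder over $S^k$ capped at both ends) lying in the interior of $\Om'$ and disjoint from $\partial\Om'$. Its normal bundle in $\Om'$ is trivial: over $Y$ it is the normal bundle of $\Sigma_0$ in $M$ pulled back (trivialised by $f$), over $D_i$ it is the $\overline{B^{n-k}}$-direction of $H_i$, and the two agree along the attaching regions precisely because $H_i$ is attached by $f$; fix the induced framing $\nu\Sigma\cong S^{k+1}\times\overline{B^{n-k}}$.

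Next I would perform the interior $(k+1)$-dimensional surgery along $(\Sigma,\nu\Sigma)$, i.e.\ remove the interior of $\nu\Sigma$ and glue in $\overline{B^{k+2}}\times S^{n-k-1}$ along $\partial\overline{B^{k+2}}\times S^{n-k-1}=S^{k+1}\times S^{n-k-1}$, and then identify the result with $N\times[0,1]$. For the bookkeeping it is convenient to use as a model for $\nu\Sigma$ the set $(f(S^k\times B^{n-k})\times[0,1])\cup H_0\cup H_1$ (the two handles together with the full tube of $Y$ inside $M\times[0,1]$); collecting the common $\overline{B^{n-k}}$-factor shows this is $\big((S^k\times[0,1])\cup\overline{B^{k+1}}\cup\overline{B^{k+1}}\big)\times\overline{B^{n-k}}=S^{k+1}\times\overline{B^{n-k}}$, and the surgery gives the same manifold whether one uses this (non-interior, since it meets $\partial\Om'$ along the belt discs) model or a slightly shrunk genuinely interior subtube, the difference being an absorbable collar. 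With this model, $\Om'$ minus the interior of $\nu\Sigma$ is $(M\setminus f(S^k\times B^{n-k}))\times[0,1]$, and the part of $\partial\nu\Sigma$ not already in $\partial\Om'$ is the side wall over $Y$, namely $(S^k\times S^{n-k-1})\times[0,1]=\partial(M\setminus f(S^k\times B^{n-k}))\times[0,1]$. Writing $\overline{B^{k+2}}\cong\overline{B^{k+1}}\times[0,1]$ so that $Y\subset S^{k+1}=\partial\overline{B^{k+2}}$ becomes $(\partial\overline{B^{k+1}})\times[0,1]$, the re-glued piece becomes $\overline{B^{k+2}}\times S^{n-k-1}\cong(\overline{B^{k+1}}\times S^{n-k-1})\times[0,1]$, attached to $(M\setminus f(S^k\times B^{n-k}))\times[0,1]$ along $(S^k\times S^{n-k-1})\times[0,1]$ by the $[0,1]$-family of the very gluing map $f|_{S^k\times S^{n-k-1}}$ that defines $N$ from $M$. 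Hence the outcome is $\big[(M\setminus f(S^k\times B^{n-k}))\cup(\overline{B^{k+1}}\times S^{n-k-1})\big]\times[0,1]=N\times[0,1]$, with the two disc-caps of the re-glued handle filling in the two boundary copies of $N$.

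The only real work is this last bookkeeping: keeping straight which faces of $\partial\nu\Sigma$ (resp.\ of $\partial(\overline{B^{k+2}}\times S^{n-k-1})$) lie in $\partial\Om'$ and which are internal, absorbing the collars introduced when the model tube is shrunk to a genuine interior tube, and checking that the corner-smoothings and the various gluing diffeomorphisms are compatible; none of this is deep but it must be done carefully. If one prefers to avoid the non-interior model tube, the statement can equally be obtained dually: since a surgery and its dual undo each other, it suffices to show that $\Om'$ is recovered from $N\times[0,1]$ by the interior $(n-k-1)$-surgery along $\Sigma_{\mathrm{dual}}\times\{1/2\}$, where $\Sigma_{\mathrm{dual}}\definedas\{0\}\times S^{n-k-1}\subset\overline{B^{k+1}}\times S^{n-k-1}\subset N$ is the belt sphere of the original surgery; its tubular neighbourhood in $N\times[0,1]$ is visibly interior, removing it from $N$ returns $M\setminus f(S^k\times B^{n-k})$ by Remark \ref{surgery_cancel}, and the computation then runs exactly as above.
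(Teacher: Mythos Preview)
Your proof is correct and follows essentially the same route as the paper: both construct the $(k+1)$-sphere $\Sigma$ as the two handle cores joined by the cylinder $\Sigma_0\times[0,1]$, take its tubular neighbourhood (the paper at radius $1/2$ to keep it strictly interior, you at full radius and then shrink), and identify the surgered manifold with $N\times[0,1]$ via the product decomposition $\overline{B^{k+2}}\cong\overline{B^{k+1}}\times[0,1]$. The paper carries out explicitly the end-piece computation (its $H\cong B^{k+2}_+\times S^{n-k-1}$) that you subsume into your ``absorbable collar'' remark, but the underlying argument is the same.
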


\noindent Start again with $\Om=M \times [0,1]$. Let $\Om''$ be obtained
from attaching first a $k$-dimensional handle on $\Om$ along $f(S^k \times
B^{n-K}) \times \{ 0 \}$ and then attaching the dual handle along $N
\subset \left( \Om \cup_{ f(S^k \times B^{n-K}) \times \{ 0 \}} \overline{B^{k+1}}
  \times\overline{ B^{n-k}} \right)$. The new manifold $\Om''$ has a boundary 
$M \amalg M$. We prove 

\begin{lemma} \label{surgery_cyl_2}
The manifold $M \times [0,1]$ is obtained from $\Om''$ by an interior $(n-k)$-dimensional surgery.
\end{lemma}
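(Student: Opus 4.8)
The plan is to track the manifold $\Om''$ through the two handle attachments and then identify the result of performing a single interior $(n-k)$-surgery on it with $M\times[0,1]$. First I would fix notation: write $\Om = M\times[0,1]$, and let $\phi = f(S^k\times B^{n-k})\times\{0\}\subset M\times\{0\}$ be the attaching region of the first handle. Attaching a $(k+1)$-handle $\overline{B^{k+1}}\times\overline{B^{n-k}}$ along $\phi$ produces a manifold $\Om_1$ with boundary $N\amalg M$, by exactly the analysis of Paragraph \ref{surgatt} (the boundary component $M\times\{0\}$ gets a $k$-surgery, turning it into $N$, while $M\times\{1\}$ is untouched). Then the dual handle is attached along the copy of $N$ sitting inside $\Om_1$ (more precisely along the image of $S^k\times S^{n-k-1}$ that bounds the new handle, which is the attaching sphere of the dual $(n-k)$-surgery on $N$, cf. Remark \ref{surgery_cancel}). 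By that remark, a $k$-surgery followed by its dual $(n-1-k)$-surgery on a boundary component returns the original manifold $M$; so after the second handle attachment the boundary of $\Om''$ is $M\amalg M$, consistent with the statement.

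The heart of the argument is to see what $\Om''$ looks like as a manifold with corners smoothed: it is $M\times[0,1]$ with a $(k+1)$-handle and an $(n-k+1)$-handle (the ``handle version'' of the dual surgery, as in the half-surgery picture at the end of Paragraph \ref{surgatt}) attached near $M\times\{0\}$. The key observation is that these two handles form a \emph{canceling pair}: the belt sphere of the first $(k+1)$-handle and the attaching sphere of the dual handle intersect transversally in a single point. A standard handle-cancellation argument (Milnor, \emph{h-cobordism}, or the elementary cancellation lemma for handles meeting once transversally) shows that $\Om''$ is diffeomorphic, rel the untouched boundary $M\times\{1\}$, to $M\times[0,1]$ itself \emph{together with a single additional $S^{n-k}\times B^{k+1}$ glued in the interior} — equivalently, $\Om''$ is obtained from $M\times[0,1]$ by an interior $(n-k)$-surgery, which is exactly the reverse direction of what we want. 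Reversing: performing the interior $(n-k)$-surgery on $\Om''$ along the corresponding embedded $S^{n-k}\times B^{k+1}$ (the core of the handle that does not cancel) undoes that surgery and yields $M\times[0,1]$.

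Concretely I would carry it out in these steps: (1) describe $\Om''$ explicitly as $M\times[0,1]\cup(\text{handle }h^{k+1})\cup(\text{handle }h^{n-k+1})$ near $M\times\{0\}$, using the ``half-surgery'' description of handle attachment from Paragraph \ref{surgatt} to keep the corners under control; (2) verify that the two handles form a canceling pair by computing the intersection of the belt sphere $\{0\}\times S^{n-k-1}\subset h^{k+1}$ with the attaching sphere of $h^{n-k+1}$ and checking it is one transverse point — this uses that the second handle is attached along the \emph{dual} sphere of the first surgery; (3) apply handle cancellation to conclude $\Om''\cong (M\times[0,1])\#^{\mathrm{int}}$ where the remaining handle corresponds to an interior $(n-k)$-surgery; (4) invoke Remark \ref{surgery_cancel} once more to note that an $(n-k)$-surgery and the operation that reverses it are mutually inverse, so $M\times[0,1]$ is obtained from $\Om''$ by an interior $(n-k)$-surgery. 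The main obstacle I anticipate is step (2)–(3): making the handle-cancellation argument clean in the presence of the boundary $M\times\{0\}$ and the smoothed corners, i.e.\ checking that the cancellation can be performed \emph{away from} the boundary component $M\times\{1\}$ so that the surgery is genuinely interior; this is where one must be careful that attaching the dual handle really is the handle-theoretic inverse move and not merely a cobordism between the boundaries. (The proof of Lemma \ref{surgery_cyl_1} is entirely parallel, with the roles of the surgery and its dual interchanged, so the same bookkeeping applies there.)
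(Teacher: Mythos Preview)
Your handle-cancellation plan has a genuine gap. The two handles you attach have indices $k+1$ and $n-k$ (the dual surgery on $N$ is $(n-1-k)$-dimensional, so its handle has index $n-k$, not $n-k+1$ as you wrote). Milnor's cancellation lemma applies only to a pair of \emph{consecutive} indices $j,\,j+1$, and here $k+1$ and $n-k$ are consecutive only in the accidental case $n=2k+2$. More fundamentally, the geometric hypothesis of the cancellation lemma fails: the attaching sphere of the dual handle is not a sphere meeting the belt sphere of the first handle transversally in one point --- it \emph{is} that belt sphere $\{0\}\times S^{n-k-1}$. Two coincident $(n-k-1)$-spheres in the $n$-dimensional boundary do not intersect transversally in a point unless $n=2k+2$. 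So step~(2) of your plan cannot be carried out in general, and the conclusion of step~(3) (that after ``cancellation'' one is left with $M\times[0,1]$ plus a residual interior surgery) does not come from any standard lemma.

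The paper proceeds instead by a direct, elementary identification. One observes that the union of the two handles is
\[
H \;=\; \bigl(B^{k+1}\times B^{n-k}\bigr)\cup_{\,B^{k+1}\times S^{n-k-1}}\bigl(B^{k+1}\times B^{n-k}\bigr)\;\simeq\; B^{k+1}\times S^{n-k},
\]
attached to $\Om=M\times[0,1]$ along $f(S^k\times B^{n-k})\subset M\times\{0\}$. Inside $\stackrel{\circ}{H}$ sits $W=B^{k+1}(\tfrac12)\times S^{n-k}$, a tubular neighbourhood of an embedded $S^{n-k}$; performing the interior $(n-k)$-surgery along $W$ replaces $H$ by
\[
H^\# \;\simeq\; S^k\times B^{n-k+1},
\]
which is just a collar on $f(S^k\times B^{n-k})$. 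Hence $\Om^\# = \Om\cup_{f(S^k\times B^{n-k})} H^\# \simeq M\times[0,1]$. No cancellation theory is needed --- the whole argument is explicit gluing of balls and spheres, parallel to the computation for Lemma~\ref{surgery_cyl_1}.
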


\begin{remark} \label{0surgery}
If $k= 0$ Lemma \ref{surgery_cyl_2}, then by standard surgery theory, the
interior $n$-dimensional surgery can be replaced by an interior surgery of
dimension $1$.
\end{remark}
\subsubsection{Proof of Lemma \ref{surgery_cyl_1}}
The manifold $\Om'$ is equal to 
$$\Om' = \left( B^{k+1}  \times B^{n-k} \right)  \cup_{f(S^k \times B^{n-k}) \times \{ 0 \}} \Om \cup_{f(S^k \times B^{n-k}) \times \{ 1 \}} \left( B^{k+1}  \times B^{n-k} \right).$$
We define
\begin{eqnarray*} 
W & \definedas  & \left(B^{k+1}  \times B^{n-k}\left(\frac{1}2 \right)  \right) 
 \cup_{f(S^k \times B^{n-k}\left(\frac{1}2 \right) ) \times \{ 0 \}} \\
& &    \left( f\left(S^k \times B^{n-k}\left(\frac{1}2 \right) \right) \times [0,1] \right) \cup_{f\left(S^k \times B^{n-k}\left(\frac{1}2 \right) \right)
 \times \{ 1 \}} \\
 & &\left( B^{k+1}  \times B^{n-k} \left(\frac{1}2 \right) \right) \subset\; \stackrel{\circ}{(\Om')}.
\end{eqnarray*}

\noindent Let $m \in \mN$. Observe that 

\begin{eqnarray}
B^{m+1} \cup_{S^m \times \{ 0\} } \left( S^{m} \times [0,1] \right) \cup_{S^m \times \{1\} } B^{m+1} \simeq S^{m+1}.
\end{eqnarray}

\noindent Here, $\simeq$ means diffeomorphic. Hence,
$W \simeq S^{k+1} \times B^{n-k}$. 

\noindent Define 
\begin{eqnarray*}
W' & \definedas & \left(B_+^{k+2} \times S^{n-k-1} \right) \cup_{B^{k+1} \times S^{n-k-1} \times\{ 0\} } \\
& & \left( B^{k+1} \times S^{n-k-1} \times [0,1] \right) \cup_{S^{k+1} \times S^{n-k-1} \times\{ 1\} } \\
&& \left(B_+^{k+2} \times S^{n-k-1} \right).
\end{eqnarray*}

\noindent Note that $\partial B_+^{k+2}= S_+^{k+1} \cup_{S_k} B^{k+1}$ hence $W'$ is well defined. For $m \in \mN$, let us note that  
$$B_+^{m+1}  \cup_{B^m \times \{ 0 \} }  \left( B^m \times [0,1] \right) \cup_{B^m \times \{ 1 \} } B_+^{m+1} \simeq B^{m+1}.$$

\noindent Hence, $W' \simeq B^{k+2} \times S^{n-k-1}$ and if we define 
$$\Om^\# \definedas (\Om' \setminus W) \cup W'$$
where we glue the boundaries, $\Om^\#$ is obtained from
$\Om'$ by an interior $(k+1)$-dimensional surgery along $W$. 

\noindent Define 

\begin{eqnarray*} 
H & \definedas & \left( B^{k+1} \times B^{n-k} \right) \setminus \left( B^{k+1} \times B^{n-k}\left(\frac{1}2 \right)  \right) 
\cup_{B^{k+1} \times S^{n-k-1}\left(\frac{1}2 \right)  \simeq S^{k+1}_+ \times S^{n-k-1}} \\
&  & \left( B^{k+2}_+ \times S^{n-k-1} \right) \\
&  \simeq & \left( B^{k+1} \times S^{n-k-1}\left(\frac{1}2 \right) \times [\frac{1}{2},1] \right)  \cup_{B^{k+1} \times S^{n-k-1} \simeq S^{k+1}_+ \times S^{n-k-1}} \\
& & \left( B^{k+2}_+ \times S^{n-k-1} \right). 
\end{eqnarray*}

\noindent Since
$$\left( B^{k+1}  \times [\frac{1}{2},1] \right) \cup_{B^{k+1} \times \{ \frac{1}{2} \}  \simeq S^{k+1}_+} B_+^{k+2} \simeq B^{k+2}_+$$
we see that
$$H \simeq B^{k+2}_+ \times S^{n-k-1}.$$
\noindent Now observe that 
$$\Om^\# = H \cup_{B^{k+1} \times S^{n-k-1} \times \{ 0 \}}  \left( N \times [0,1] \right) \cup_{B^{k+1} \times S^{n-k-1} \times \{ 1 \} } H$$ 
\noindent It is not difficult to see that  $\Om^\# \simeq N \times [0,1]$. This proves Lemma \ref{surgery_cyl_1}.

\subsubsection{Proof of Lemma \ref{surgery_cyl_2}}
\noindent Let 
$$H \definedas \left(B^{k+1} \times B^{n-k}\right) \cup_{B^{k+1} \times S^{n-k-1}} \ \left(B^{k+1} \times B^{n-k} \right).$$  
\noindent We have 
$$\partial H = \left(S^k \times B^{n-k}\right) \cup_{S^k \times S^{n-k-1}}  \left(S^k \times B^{n-k}\right). $$
Since for all $m \in \mN$, $n \geq 1$, 
$$B^m \cup_{S^{m-1}} B_m \simeq S^m$$
(by smoothing the corners), we have 

$$H \simeq B^{k+1} \times S^{n-k} \; \hbox{ and } \; \partial H 
\simeq S^k \times S^{n-k}.$$ 

\noindent By construction, $\Om''$ is equal to 
 $$ \Om'' =  \Om \cup_{f(S^k \times B^{n-k}) } H.$$

\noindent Now, we set 

\begin{eqnarray*} 
W & \definedas & \left(B^{k+1}\left(\frac{1}{2}\right) \times B^{n-k}\right) \cup_{B^{k+1}\left(\frac{1}{2}\right) \times S^{n-k-1}} \left(B^{k+1} \left(\frac{1}{2}\right) \times B^{n-k} \right)\\
&  \simeq &  B^{k+1}\left(\frac{1}{2}\right)  \times S^{n-k} \subset \; \stackrel{\circ}{H}.
\end{eqnarray*}
We now perform a surgery on $\Om''$ along $W$ to get a new manifold $\Om^\#$.  Then, 
\begin{eqnarray} \label{omsharp}
\Om^\# = \Om \cup_{f(S^k \times B^{n-k}) } H^\#
\end{eqnarray}
where 
\begin{eqnarray*} 
H^\#&  \simeq  & \left(B^{k+1}  \times B^{n-k}\right) \setminus \left(B^{k+1}\left(\frac{1}{2}\right) \times B^{n-k}\right) \cup_{S^k
 \times S^{n-k} } \left(S^k  \times B^{n-k+1}\right) \\
& \simeq & \left( \left[ \frac{1}{2}, 1 \right] \times S^k \times S^{n-k} \right) \cup_{S^k \times S^{n-k}}  \left(S^k  \times B^{n-k+1}\right)\\
& \simeq & S^k \times B^{n-k-1}. 
\end{eqnarray*}

\noindent Note again that 
$$\partial H^\# = \left(S^k \times B^{n-k}\right) \cup_{S^k \times S^{n-k-1}}  \left(S^k \times B^{n-k}\right)$$
and the gluing in Formula \eref{omsharp} is along the first  $\left(S^k \times B^{n-k}\right)$. 
\noindent Now, it is easy to see that $\Om^\# \simeq \Om$. This ends the proof of Lemma \ref{surgery_cyl_2}. 

\section{$c$-concordant metrics} \label{concordant}
Let $M$ be a compact manifold without boundary of dimension $n \geq 3$. Let
$\riem(M)$ be the set of all Riemannian metrics on $M$. For all $c \in \mR$, 
we set 
$$\riem_c(M) = \left\{ g \in \riem(M) \big| \mu(M,[g]) >c \right\}.$$

\noindent Let $g,h$ be Riemannian metrics on $M$ and $c \geq 0 \mR$. We say
that {\it $h,g$ are c-concordant} if $\mu(M \times [0,1]; M \amalg M, [g]
\amalg [h]) >0$ and if $g,h \in \riem_c(M)$. If $M$ is oriented and if $g,h$ have positive scalar
curvature, then $g,h$ are c-concordant if and only if $g,h \in \riem_c(M)$
and  there exists a metric $G$ with positive scalar curvature on $M
\times [0,1]$ such that the boundary is minimal (see
Corollary D in  \cite{akutagawa.botvinnik:02b}).  A consequence of
Theorem 5.1 in  \cite{akutagawa.botvinnik:02a} is the fact that "to be
$c$-concordant" is an equivalence relation. We denote by $\conc_c(M)$ the
set of equivalence classes of concordant metrics. 
For a metric $h$ on a manifold $P$, we denote by $[h]^c_P$ its class in
$\conc_c(P)$. If $c,c' \in \mR$ are such that $c \leq c'$ and if $h \in
\riem_{c'}(M) \subset \riem_{c}(M)$, then we clearly have 
\begin{eqnarray} \label{cc'}
[h]_M^{c'} = [h]_M^c \cap \riem_{c'}(M).  
\end{eqnarray}
 Let $g,h$ be Riemannian metrics in $M$. An important
well-know fact is the following 
\begin{eqnarray} \label{isotopic_implies_conc}
g,h \hbox{ are in the same connected component of } \riem_0(M)
\Longrightarrow [g]^0_M =[h]^0_M.
\end{eqnarray}

\noindent Lots of works aim to study the sets $\riem_0(M)$ and $\conc_0(M)$
(\cite{carr:88,hajduk:88,hajduk:91,rosenberg.stolz:98,ruberman:02}). In particular, Gajer  proved in \cite{gajer:93} very interesting
results about the topology and the structures of these sets.
The reader may also consult Dahl \cite{dahl:06} for a nice study of the set
of metrics with invertible Dirac operator on spin manifolds. \\

\noindent The goal of this section is to show how Theorem \ref{main} can be
applied to collect informations on $\conc_c(M)$  and in particular to prove
Theorem \ref{application}. For this,
we need to introduce lds-relative manifolds. 






\begin{defn}
Let $M_1,M_2$ be $n$-dimensional compact manifolds without boundary. We say that $M_1,M_2$ are {\it lds-relative} ("lds" for "low dimensional surgery") if $M_2$  can be obtained from $M_1$ with a finite sequence of surgeries of dimension $2 \leq k \leq n-3$.
\end{defn} 

\noindent \begin{remark} 
\begin{enumerate} 
\item Remark \ref{surgery_cancel} obviously implies that "to be lds-relative" is an equivalence relation.
 We denote by $\lds$ the set of equivalence classes of lds-relative $n$-manifolds.
\item Let $M,N$ be two compact connected $n$-manifolds. Assume that there is a
$2$-connected bordism between $M$ and $N$. Then, it follows from standard
theory that $M,N$ are lds-relative.
\end{enumerate} 
\end{remark}

\noindent An immediate consequence of Theorem \ref{adh} is the
following.

\begin{prop}
Let $\beta_n >0$ be the positive constant defined as in Corollary 
\ref{main_cor}. For all compact $n$-manifold without boundary $M$, we define
$\bar{\sigma}(M) = \min (\sigma(M),\beta_n)$. Then, 
$$\bar{\sigma} : \lds \; \longrightarrow \; ]-\infty, n(n-1) \om_n^{\frac{2}{n}}]$$
where $\om_n$ denotes the volume of the standard $n$-dimensional sphere, is
a well-defined map. 
\end{prop}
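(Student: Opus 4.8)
The plan is to verify two things: that $\bar{\sigma}$ indeed maps into $]-\infty, n(n-1)\om_n^{2/n}]$, and that its value depends only on the lds-relative class of $M$. The first point is immediate from the properties of $\sigma$ recalled in the introduction: Aubin's inequality gives $\sigma(M) \leq \sigma(\mS^n) = n(n-1)\om_n^{2/n}$ for every compact $n$-manifold $M$ without boundary, and since $\beta_n > 0$ and $\sigma(M)$ is finite we get $-\infty < \bar{\sigma}(M) = \min(\sigma(M), \beta_n) \leq \sigma(M) \leq n(n-1)\om_n^{2/n}$.

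For well-definedness it suffices, since being lds-relative is generated by single surgeries of dimension $k$ with $2 \leq k \leq n-3$ and is transitive, to show $\bar{\sigma}(M) = \bar{\sigma}(N)$ whenever $N$ is obtained from $M$ by one such surgery. I would first apply Theorem \ref{adh} to this surgery: as $k \in \{0,\dots,n-3\}$ it yields $\sigma(N) \geq \min(\sigma(M), \beta_{n,k}) \geq \min(\sigma(M), \beta_n)$, and taking $\min(\cdot, \beta_n)$ on both sides gives $\bar{\sigma}(N) \geq \bar{\sigma}(M)$. For the opposite inequality I would use Remark \ref{surgery_cancel}: $M$ is recovered from $N$ by the dual surgery, which has dimension $n-1-k$; the constraint $2 \leq k \leq n-3$ translates into $2 \leq n-1-k \leq n-3$, so the dual surgery again lies in the admissible range of Theorem \ref{adh} (indeed in the lds range). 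Applying Theorem \ref{adh} to the surgery $N \to M$ and the same computation gives $\bar{\sigma}(M) \geq \bar{\sigma}(N)$, hence equality. Iterating over a finite chain of surgeries, $\bar{\sigma}$ is constant on each class of $\lds$, which is the claim.

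The only subtlety worth flagging — and the closest thing to an obstruction — is the symmetry of the dimension window under $k \mapsto n-1-k$: this is precisely what lets the surgery formula be used in both directions and upgrades the one-sided inequality coming from Theorem \ref{adh} to an equality. Everything else is a direct bookkeeping of the inequalities $\min(\sigma(N),\beta_n) \geq \min(\min(\sigma(M),\beta_n),\beta_n) = \min(\sigma(M),\beta_n)$.
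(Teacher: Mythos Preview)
Your argument is correct and is exactly what the paper has in mind: the proposition is stated as ``an immediate consequence of Theorem~\ref{adh}'', and your write-up simply spells out that consequence, using the dual surgery (Remark~\ref{surgery_cancel}) and the symmetry $k\mapsto n-1-k$ of the window $\{2,\dots,n-3\}$ to turn the one-sided surgery inequality into an equality for $\bar\sigma$.
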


\noindent As an application of Theorem \ref{main}, we prove:
\begin{prop} \label{bij}
Let $M, N$ be lds-relative $n$-manifolds. For all $c \leq \beta_n$
($\beta_n$ is as above), there are bijective maps    
$$\Theta^c_{M,N} : \conc_c(M) \to \conc_c(N)$$ such that $\Theta_{M,N}^C =
(\Theta_{N,M}^c)^{-1}$.  
In addition, let $c,c' \in \mR$ with  $c \leq c'$ and let $h \in \riem_{c'}(M)
\subset \riem_c(M)$. Then, 
\begin{eqnarray} \label{s3}
\Theta^{c'}_{M,N}([h]^{c'}_M) = \Theta^{c}_{M,N} ([h]^{c}_N) \cap
\riem_{c'}(N).\\
\nonumber
\end{eqnarray}
\end{prop}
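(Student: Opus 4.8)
The plan is to construct $\Theta^c_{M,N}$ by decomposing the sequence of surgeries taking $M$ to $N$ into single steps and handling each step with the cylinder lemmas of Paragraph \ref{surgery_cyl}. By induction it suffices to treat the case where $N$ is obtained from $M$ by a single surgery of dimension $k\in\{2,\dots,n-3\}$ along an embedding $f\colon S^k\times B^{n-k}\hookrightarrow M$. Given a $c$-concordance $G$ on $M\times[0,1]$ between metrics $g,h\in\riem_c(M)$, I would attach the two $(k+1)$-dimensional handles along $f(S^k\times B^{n-k})\times\{0,1\}$ to obtain $\Om'$ with $\partial\Om' = N\amalg N$; applying Theorem \ref{main} (or rather Theorem \ref{main_surgery}) twice produces a metric $G'$ on $\Om'$ whose boundary conformal classes $C_1^\#\amalg C_2^\#$ satisfy $\mu(\Om',[G'];\partial\Om',\cdot)\geq\min(\mu(M\times[0,1],[G];\cdot),\alpha_{n,k})$, hence $>0$ since $\alpha_{n,k}>0$ and the original concordance has $\mu>0$ (note the boundary conformal classes on the two copies of $N$ agree since the handles are attached symmetrically, using that the metrics $h_\th$ of Theorem \ref{adh} depend only on the surgery data). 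Then Lemma \ref{surgery_cyl_1} says $N\times[0,1]$ is obtained from $\Om'$ by an \emph{interior} $(k+1)$-dimensional surgery, whose codimension is $(n+1)-(k+1)=n-k\geq 3$; so Theorem \ref{main_surgery_interior} applies and yields a metric $G^\#$ on $N\times[0,1]$, agreeing with $G'$ near the boundary, with $\mu(N\times[0,1],[G^\#];N\amalg N,\cdot)>0$. This exhibits a $c$-concordance between the two resulting metrics on $N$, so setting $\Theta^c_{M,N}([g]^c_M)$ to be this class gives a well-defined map provided the estimates keep us inside $\riem_c$; this is where the hypothesis $c\leq\beta_n$ is used, via the second conclusion of Theorem \ref{main} that controls $\mu(M^\#,C^\#)\geq\min(\mu(M,C),\beta_n)-\ep$ on each boundary slice, combined with \eref{isotopic_implies_conc} to absorb the $\ep$.

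Next I would check that $\Theta^c_{M,N}$ is well-defined on concordance classes, i.e.\ independent of the representative $g$ and of the choices made in the construction (the parameter $\th$, the function $F$, etc.). The point is that two $c$-concordant metrics $g_0,g_1$ on $M$ give $c$-concordances which can be concatenated, and the construction above, applied to the concatenated concordance on $M\times[0,1]$, shows the images are $c$-concordant on $N$; different auxiliary choices yield metrics lying in the same connected component of $\riem_0$ (they vary continuously), so \eref{isotopic_implies_conc} finishes this. For bijectivity: the surgery from $M$ to $N$ of dimension $k$ has a dual surgery from $N$ to $M$ of dimension $n-1-k$, and since $2\leq k\leq n-3$ the dual dimension $n-1-k$ also lies in $\{2,\dots,n-3\}$; so the same construction gives $\Theta^c_{N,M}$, and the composition $\Theta^c_{N,M}\circ\Theta^c_{M,N}$ is built from performing a surgery and then its dual. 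Lemma \ref{surgery_cyl_2} is exactly what governs this: performing a $k$-handle and then the dual handle on $M\times[0,1]$ produces $\Om''$ from which $M\times[0,1]$ is recovered by an interior $(n-k)$-dimensional surgery (codimension $(n+1)-(n-k)=k+1\geq 3$), so Theorem \ref{main_surgery_interior} again applies and shows $\Theta^c_{N,M}\circ\Theta^c_{M,N} = \id$ on $\conc_c(M)$. Symmetrically for the other composition, giving $\Theta^c_{M,N} = (\Theta^c_{N,M})^{-1}$.

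Finally, \eref{s3} is a compatibility statement between the maps at two thresholds $c\leq c'$. Here I would use \eref{cc'}: for $h\in\riem_{c'}(M)$ one has $[h]^{c'}_M = [h]^c_M\cap\riem_{c'}(M)$. The construction of $\Theta^{c}_{M,N}$ and $\Theta^{c'}_{M,N}$ uses the \emph{same} handle attachments and surgeries — only the threshold below which we demand $\mu>c$ changes — so the image metric on $N$ is literally the same metric (up to the connected-component ambiguity already quotiented out); the two maps therefore agree whenever both are defined. Intersecting with $\riem_{c'}(N)$ on the target side records precisely the passage from the $c$-concordance class to the $c'$-concordance class, which is \eref{cc'} applied on $N$. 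The main obstacle I anticipate is bookkeeping the boundary conformal classes: one must verify that the metrics $g_\th$ from Theorem \ref{main_surgery}, when the two handles are attached at $t=0$ and $t=1$, induce the \emph{same} conformal class $C^\#$ on both copies of $N$ (so that the result is genuinely a concordance, i.e.\ both ends live over the same point of $\conc_c(N)$), and likewise that Theorem \ref{main_surgery_interior} leaves the boundary untouched — this is guaranteed by the ``equal to $G$ in a neighborhood of $M$'' clause, but it needs to be threaded carefully through the induction so that the class $C^\#$ produced at each stage is the one fed into the next.
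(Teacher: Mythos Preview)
Your outline is essentially the paper's own argument: define $\Theta^c_{M,N}([g]^c_M)\definedas [g_\th]^c_N$ via the metrics of Theorem~\ref{adh}, prove well-definedness by attaching handles at both ends of $M\times[0,1]$ and invoking Lemma~\ref{surgery_cyl_1} together with Theorem~\ref{main_surgery_interior}, and prove bijectivity via the dual surgery and Lemma~\ref{surgery_cyl_2}. The codimension checks you give ($n-k\geq 3$ and $k+1\geq 3$) are exactly the ones the paper uses, and your treatment of \eref{s3} via \eref{cc'} matches the paper's.

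Two places where you overcomplicate things. First, the appeal to \eref{isotopic_implies_conc} ``to absorb the~$\ep$'' is unnecessary and does not do what you want: the paper simply fixes $c<\beta_n$ and takes $\th$ small enough so that $\mu(N,[g_\th])>\min(\mu(M,[g]),\beta_n)-\ep>c$; no isotopy argument enters. Likewise, independence of the parameter $\th$ is not obtained by a connected-component argument but directly by the same handle/surgery mechanism (this is the paper's relation~\eref{s1}). Second, your ``main obstacle''---that the two boundary classes on $N\amalg N$ must literally coincide---is a non-issue. After attaching the two handles (necessarily with two separate parameters $\th_1,\th_2$) the boundary metrics are $g_{\th_1}\amalg g_{\th_2}$, and these are generally \emph{different} metrics; the whole point of the construction plus Lemma~\ref{surgery_cyl_1} and Theorem~\ref{main_surgery_interior} is to show they are $c$-\emph{concordant}, so $[g_{\th_1}]^c_N=[g_{\th_2}]^c_N$ and the map is well-defined. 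You do not need, and will not get, equality of conformal classes on the two ends.
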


\begin{remark} \label{0surgery2}
Let $M,N$ be compact $n$-manifolds without boundary and assume that $N$ is obtained
from $M$ by a surgery of dimension $0$. In particular, these condition are
satisfied if $M = M_1 \amalg M_2$ and if $N = M_1 \# M_2$ is the connected
sum of $M_1$ and $M_2$. One can verify that the proof of Proposition \ref{bij}
can be mimicked, unless we use Remark \ref{0surgery} instead of Lemma
\ref{surgery_cyl_2} to obtain for all $c$ an injective map  $\Theta^c_{M,N}
: \conc_c(M) \to \conc_c(N)$.\\ 
\end{remark}

\noindent For $c=0$, Proposition \ref{bij} was already known (see \cite{gajer:93}). The proof here is slightly different and uses only basic facts on surgery. \\

\noindent We now define 
\[ \sigma' \definedas \left| \begin{array}{ccc}
\conc_0(M) & \to & ] \infty,\sigma(M)] \\
C & \mapsto & \sup_{g \in C} \mu(M,[g]).
\end{array} \right. \]
Clearly, 
$$\sup_{C \in \conc_0(M)} \sigma'(C) = \sigma(M).$$
Let also $\sigma'' \definedas \min(\sigma', \beta_n)$. 
As an application of Proposition \ref{bij}, we get Theorem
\ref{application} we recall here:

\begin{cor} \label{cor1}
Assume that $M,N$ are lds-relative, then 
$$\sigma'' (\conc_0(M) )= \sigma''(\conc_0(N)).$$
\end{cor}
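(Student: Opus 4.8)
The plan is to deduce Corollary~\ref{cor1} (that is, Theorem~\ref{application}) from Proposition~\ref{bij}. By induction on the number of surgeries it suffices to treat the case where $N$ is obtained from $M$ by a single surgery of dimension $k\in\{2,\dots,n-3\}$, so that $M$ and $N$ are lds-relative via one step. Proposition~\ref{bij} gives, for every $c\le\beta_n$, a bijection $\Theta^c_{M,N}\colon\conc_c(M)\to\conc_c(N)$ with inverse $\Theta^c_{N,M}$, compatible with the ``restriction'' maps in the sense of \eref{s3}. The idea is that $\Theta^c_{M,N}$ is built, roughly, from the surgery construction of Theorem~\ref{main} applied to the cylinder $M\times[0,1]$ (using Lemmas \ref{surgery_cyl_1} and \ref{surgery_cyl_2} to identify the resulting manifold with $N\times[0,1]$), and that this construction only improves (never decreases below $\beta_n$) the relevant Yamabe quantities. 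Concretely, I expect Proposition~\ref{bij}, together with Theorem~\ref{main}, to yield the key estimate
\[
\si''\bigl(\Theta^0_{M,N}(C)\bigr)\;\ge\;\si''(C)
\]
for every $C\in\conc_0(M)$, and symmetrically with the roles of $M,N$ interchanged.

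The first step is therefore to establish that inequality. Fix $C\in\conc_0(M)$ and let $g\in C$ be a metric on $M$ with $\mu(M,[g])$ close to $\min(\sigma'(C),\beta_n)$; we may assume $\mu(M,[g])>0$. Attaching handles to $M\times[0,1]$ along $f(S^k\times B^{n-k})\times\{0,1\}$ as in Paragraph~\ref{surgery_cyl} produces a bordism from $M^\#\amalg M^\#=N\amalg N$, and Lemma~\ref{surgery_cyl_1} identifies the relevant piece with $N\times[0,1]$. Applying Theorem~\ref{main} (and the positivity $\alpha_n,\beta_n>0$, $k\le n-3$) to this cylinder shows that the metric produced on $N^\#$ has Yamabe constant at least $\min(\mu(M,[g]),\beta_n)-\ep$; moreover the accompanying bordism metric has positive relative Yamabe constant, so the resulting metric $g^\#$ on $N$ is $c$-concordant (for any $c<\min(\mu(M,[g]),\beta_n)$) to something representing $\Theta^0_{M,N}(C)$ — this is exactly what the construction underlying Proposition~\ref{bij} records. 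Taking the supremum over $g\in C$ and letting $\ep\to0$ gives $\si''(\Theta^0_{M,N}(C))\ge\si''(C)$.

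The second step is the symmetric statement. Here one uses that $M$ is recovered from $N$ by the dual surgery of dimension $n-1-k$; since $2\le k\le n-3$ forces $2\le n-1-k\le n-3$ as well, the dual surgery also has dimension in the allowed range $\{2,\dots,n-3\}$, so the same argument — now using Lemma~\ref{surgery_cyl_2} and Remark~\ref{surgery_cancel} to see that the doubly-handle-attached cylinder $\Om''$ deformation retracts appropriately to $M\times[0,1]$ — produces $\si''(\Theta^0_{N,M}(C'))\ge\si''(C')$ for all $C'\in\conc_0(N)$. Applying this with $C'=\Theta^0_{M,N}(C)$ and using $\Theta^0_{N,M}=(\Theta^0_{M,N})^{-1}$ yields $\si''(C)\ge\si''(\Theta^0_{M,N}(C))$, hence equality: $\si''\circ\Theta^0_{M,N}=\si''$ on $\conc_0(M)$. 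Since $\Theta^0_{M,N}$ is a bijection $\conc_0(M)\to\conc_0(N)$, it follows that $\si''(\conc_0(M))=\si''(\conc_0(N))$, which is the claim; the general lds-relative case follows by composing the single-step bijections.

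The main obstacle I anticipate is the bookkeeping in the first step: one must check carefully that the metric $g^\#$ on $N$ produced by Theorem~\ref{main} applied to the cylinder genuinely lies in the concordance class $\Theta^0_{M,N}([g]^0_M)$ — i.e. that the handle-attached cylinder, once trimmed via Lemma~\ref{surgery_cyl_1} to a copy of $N\times[0,1]$, carries a metric that is simultaneously of positive relative Yamabe type (so it is an admissible concordance) and has the correct boundary behaviour (minimal boundary, prescribed conformal classes on the two ends). This is precisely the content that Proposition~\ref{bij} packages, so the real work is to quote it in the right form; the scalar-curvature/Yamabe estimates themselves are then immediate from Theorem~\ref{main} and the positivity of $\alpha_n,\beta_n$. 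A secondary point to handle cleanly is the $\ep$-loss in the second displayed inequality of Corollary~\ref{main_cor}: one takes suprema and then lets $\ep\to0$, which is harmless because $\si''$ is defined with a supremum over the (isotopy-invariant, by \eref{isotopic_implies_conc}) concordance class.
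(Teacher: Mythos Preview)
Your argument is correct, and its skeleton --- prove $\si''(\Theta^0_{M,N}(C))\ge\si''(C)$, then swap the roles of $M$ and $N$ using $\Theta^0_{N,M}=(\Theta^0_{M,N})^{-1}$ --- is exactly the paper's. The difference is in how you obtain the inequality. You go back into the construction: pick $g\in C$ with $\mu(M,[g])$ near $\si''(C)$, recall that $\Theta^0_{M,N}([g]^0_M)=[g_\th]^0_N$ by the very definition in the proof of Proposition~\ref{bij}, and then invoke Theorem~\ref{main} (really Theorem~\ref{adh}) to get $\mu(N,[g_\th])\ge\min(\mu(M,[g]),\beta_n)-\ep$. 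The paper instead uses only the \emph{statement} of Proposition~\ref{bij}, in particular the compatibility relation~\eref{s3}: if $h_\ep\in C\cap\riem_{c-\ep}(M)$ with $c=\si''(C)$, then \eref{s3} gives $\Theta^0_{M,N}(C)\cap\riem_{c-\ep}(N)=\Theta^{c-\ep}_{M,N}([h_\ep]^{c-\ep}_M)\ne\emptyset$, hence $\si''(\Theta^0_{M,N}(C))\ge c-\ep$. This is shorter and avoids reopening the surgery construction or re-invoking Lemmas~\ref{surgery_cyl_1}--\ref{surgery_cyl_2}; all of that work has already been absorbed into \eref{s3}. Your route has the advantage of being more transparent about \emph{why} the inequality holds (the surgery metric $g_\th$ itself witnesses the lower bound), at the cost of relying on the specific formula $\Theta^c_{M,N}([g]^c_M)=[g_\th]^c_N$, which appears only in the proof, not the statement, of Proposition~\ref{bij}.
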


\subsection{Proof of Proposition \ref{bij}}. 
We set 
$$c_n = \min_{k \in \{0,\cdots, n-3\}} \beta_{n,k}>0$$ 
where $\beta_{n,k}$ is the constant which appears in the statement of
Theorem \ref{main}. We fix some $c < c_n$. 
Let $M$, $N$ be some compact manifolds and let $g \in
\riem_c(M)$. Assume that $N$ is
obtained from $M$ by a surgery of dimension $k \in \{0,\cdots,n-3\}$. By
Theorem \ref{adh}, there exists a sequence of metrics
$(g_\th)_{\th >0}$ on $N$ such that for $\th$ small enough (smaller than some $\ep>0$), $g_\th \in
\riem_c(N)$.  
We define  
\[ \Theta^c_{M,N} \definedas \left| \begin{array}{ccc} 
\conc_c(M) &  \to & \conc_c(N) \\

[g]_M^c & \mapsto & [g_\th]_N^c.
\end{array} \right. \]
We have to show that $\Theta^c(M,N)$ is well-defined and is a bijection if
$M$ and $N$ are lds-relative.  
First, let us show that if $0<\th_1, \th_2$ are small enough then 
\begin{eqnarray} \label{s1}
[g_{\th_1}]_N^c = [g_{\th_2}]_N^c.
\end{eqnarray}

\noindent Let $\Om \definedas M \times [0,1]$. We equip $\Om$ with the
product metric $G = g + dt^2$. We attach the $(k+1)$-dimensional handle to
$\Om$ along $M \times \{ 0 \}$ related to the given surgery to obtain a manifold $\Om_1$ with $\partial
\Om_1 = N \amalg M$.  By Theorem \ref{main} applied with $g= G$, there exists a
sequence of metrics $(G^1_\th)$ on $\Om_1$ for which the boundary is
minimal and such that for $\theta$ small,
$\mu(\Om_1, [G^1_\th]; N \amalg M, \partial[G^1_\th]) >0$ and such that
$\partial G^1_\th \in \riem_c(N \amalg M)$. By
construction,
$$\partial G^1_\th = g_\th \amalg g.$$ 
We choose  $\th= \th_1$ small enough so that these conditions are satisfied. 
Now,  we attach the $(k+1)$-dimensional handle to
$\Om$ along $M \times \{ 1 \}$ related to the given surgery to obtain a manifold $\Om_2$ with $\partial
\Om_2 = N \amalg N$. Again by Theorem \ref{main}  applied with
$g=G_{\th_1}^1$, we obtain a sequence of metrics $(G^2_\th)$ on $\Om_2$  for which the boundary is
minimal and such that for $\theta$ small,
$\mu(\Om_2, [G^2_\th]; N \amalg N, \partial[G^2_\th]) >0$, such that
$\partial G^2_\th \in \riem_c(N \amalg N)$  and by
construction,
$$\partial G^2_\th = g_{\th_1} \amalg g_\th.$$ 
Choose $\th_2$ small enough such these conditions are satisfied. Note that
since the metrics $G^1_\th$ is equal to $G$ near $M \times \{1\}$, the
number $\th_2$ does not depend on the choice of $\th_1$. 
Now, by Lemma \ref{surgery_cyl_1}, $N \times [0,1]$ is obtained from
$\Om_2$ by a $(k+1)$-dimensional interior surgery on $\Om_2$. By Theorem
\ref{main_surgery_interior}, there exists a sequence of metrics $(G_\th)$
on $N \times [0,1]$ equal to $G^2_{\th_2}$ in a neighborhood of $N \amalg
N$ such that $\mu(N \times [0,1], [G_\th]; N \amalg N,
\partial[G_\th]) >0$. Since $\partial G^2_{\th_2} = g_{\th_1} \amalg
g_{\th_2}$, we obtain that $\si(N\times[0,1];    N \amalg N,  g_{\th_1} \amalg
g_{\th_2})>0$. Since $\partial G^2_\th \in \riem_c(N \amalg N)$, we have
that $g_{\th_1}, g_{\th_2} \in \riem_c(N)$ and hence, these two metrics are
$c$-concordant. \\

\noindent Now, let $g, h$ be two metrics on $M$ which are $c$-concordant
and let $G$ be a metric on $M \times [0,1]$ such that the boundary $M
\amalg M$ is minimal, with $\partial G = g \amalg h$ and such that 
$\mu(M \times [0,1]; M \amalg M, [g] \amalg [h]) >0$. Doing the same than
above, we show that $g_{\th_1}$ and $h_{\th_2}$ are $c$-concordant on $N$
if $\th_1$ and $\th_2$ are small enough.  

\noindent This shows that $\Theta^c_{M,N}$ is well-defined.
Now assume that $M$ and $N$ are lds-relative and consider the dual surgery
from $N$ to $M$. In the same way, we can construct 
$$ \Theta^c_{N,M} : \conc_c(N) \to \conc_c(M)$$
as above. 
We now prove that 
\begin{eqnarray} \label{composition}
\Theta_{N,M}^c \circ \Theta_{M,N}^c = Id_{\conc_c(M)}.
\end{eqnarray}

\noindent Let $g \in \riem_c(M)$. Define $\Om \definedas M \times
[0,1]$ and let $G \definedas g + dt^2$ and let $\Om_1$ be obtained as
above equipped with a metric $G^1_{\th_0}$ ($\th_0$ small enough) for which the boundary is
minimal and such that 
$\partial G^1_{\th_0} = g_{\th_0} \amalg g \in \riem_c(N \amalg M$ with $[g_{\th_0}]^c_N =
\Theta^c_{M,N}([g]_M^c)$ and such that $\mu(\Om_1, [G^1_{\th_0}]; N \amalg
M, \partial[G^1_{\th_0}]) >0$. Now, we attach the $(n-k)$-dimensional handle
on $\Om_1$ along $N$ corresponding to the dual surgery from $N$ to $M$. We
get a new manifold $\Om_3$ such that $\partial \Om_3= M \times M$. We
apply Theorem \ref{main} with $g=G^1_{\th_0}$ and we get a metric $G^3_{\th_3}$
 for which the boundary is
minimal and such that 
$$\partial G^3_{\th_3} = (g_{\th_0})_{\th_3} \amalg g \in \riem_c(M \amalg M)$$ 
with 
\begin{eqnarray} \label{s2} 
[(g_{\th_0})_{\th_3}]_M^c = \Theta^c_{N,M} ([g_{\th_0}]_N^c) =
\Theta_{N,M}^c \circ \Theta_{M,N}^c ([g]_M^c).
\end{eqnarray} 
By Lemma \ref{surgery_cyl_2}, $M \times [0,1]$ is obtained from $\Om_3$ by
an interior $(n-k)$-dimensional surgery. Hence, by Theorem
\ref{main_surgery_interior}, there exists a metric $G_\th$ on $M \times
[0,1]$ equal to $G^3_{\th_3}$ in a neighborhood of the boundary in a neighborhood of $M \amalg
M$ such that $\mu(M \times [0,1], [G_\th]; M \amalg M,
\partial[G_\th]) >0$. Since $\partial G^3_{\th_3} = (g_{\th_0})_{\th_3}  \amalg
g$, and since $(g_{\th_0})_{\th_3} , g  \in \riem_c(N)$, they are
$c$-concordant. By \eref{s2}, we obtain 
$$ [g]_M^c = [(g_{\th_0})_{\th_3}]_M^c = 
\Theta_{N,M}^c \circ \Theta_{M,N}^c ([g]_M^c).$$ \\
This proves Relation \eref{composition}. In the same way, we prove that 
$$\Theta_{M,N}^c \circ \Theta_{N,M}^c = Id_{\conc_c(N)}.$$
We obtain that $\Theta_{M,N}^c$ is a bijective map whose 
inverse is $\Th_{N,M}^c$.\\

\noindent To prove Relation \eref{s3}, we fix $c \leq  c'$ and $h \in \riem_{c'}(M)$.
In view of  the definition of $\Th^c_{M,N}$ and using Relation \eref{cc'}, 
we have for $\th$ small enough 
$$\Theta^{c'}_{M,N}([h]^{c'}_M) = 
[h_\th]_N^{c'} = [h_\th]_N^c \cap \riem_{c'} (N) =
\Theta^c_{M,N}([h_\th]_M^c) \cap \riem_{c'} (N).$$
The proof of Proposition \ref{bij} is now complete. 

\subsection{Proof of Corollary \ref{cor1}}
Let $C \in \conc_0(M)$, $C' \definedas \Th^0_{M,N}(C)$. 
Set $c \definedas \si''(C)$ and $c' \definedas \si''(C')$. 
We are done if we prove that
\begin{eqnarray} \label{CC'}
c=c'.
\end{eqnarray}
By definition of $\sigma''$, for all $\ep >0$,  $C
\cap \riem_{c -\ep} \not= \emptyset$. So let $h_\ep \in C
\cap \riem_{c -\ep}$. 
By Relation \eref{cc'}, $C \cap \riem_{c-\ep} = [h_\ep]_M^{c-\ep}$. 
Relation \eref{s3} then leads to 
\begin{eqnarray*}
C' \cap \riem_{c-\ep}(N) & = & \Theta^0_{M,N} (C)
\cap \riem_{c- \ep}(N) \\
 & = & \Theta^0_{M,N} ( [h_\ep]_M^0)
\cap \riem_{c- \ep}(N) \\
& =& \Theta^{c-\ep}_{M,N} ([h_\ep]_M^{c-\ep})\\
& =&  \Theta^{c-\ep}_{M,N} (C \cap \riem_{c-\ep}(M))
\end{eqnarray*}
and consequently,   
$C' \cap \riem_{c-\ep}(N) \not= \emptyset$ which implies $c' \geq c$. 
In the same way, since $\Th^0_{M,N} = (\Th^0_{N,M})^{-1}$, we have $c \geq c'$ and Relation \eref{CC'} is proven. This
ends the proof of Corollary \ref{cor1}.




\end{document}